\newtheorem{theorem}{Theorem}
\newtheorem{lemma}{Lemma}
\newtheorem{corollary}{Corollary}
\theoremstyle{definition}
\newtheorem{example}{Example}
\newtheorem{note}{Note}
\newtheorem{definition}{Definition}
\newtheorem{problem}{Problem}
\newtheorem{question}{Question}
\newtheorem{conjecture}{Conjecture}
\DeclareMathOperator{\rep}{Rep}
\titleformat{\section}{\large\bfseries}{\thesection}{1em}{}
\titleformat{\subsection}{\bfseries}{\thesubsection}{1em}{}
\title{Prime decomposition of modular tensor \\ categories  of local modules of Type D}
\date{\today}
\author{Andrew Schopieray}
\begin{document}

\maketitle

\begin{abstract}\footnotesize
\noindent Let $\mathcal{C}(\mathfrak{g},k)$ be the unitary modular tensor categories arising from the representation theory of quantum groups at roots of unity for arbitrary simple finite-dimensional complex Lie algebra $\mathfrak{g}$ and positive integer levels $k$.  Here we classify nondegenerate fusion subcategories of the modular tensor categories of local modules $\mathcal{C}(\mathfrak{g},k)_R^0$ where $R$ is the regular algebra of Tannakian $\text{Rep}(H)\subset\mathcal{C}(\mathfrak{g},k)_\text{pt}$.  For $\mathfrak{g}=\mathfrak{so}_5$ we describe the decomposition of $\mathcal{C}(\mathfrak{g},k)_R^0$ into prime factors explicitly and as an application we classify relations in the Witt group of nondegenerately braided fusion categories generated by the equivalency classes of $\mathcal{C}(\mathfrak{so}_5,k)$ and $\mathcal{C}(\mathfrak{g}_2,k)$ for $k\in\mathbb{Z}_{\geq1}$.

\end{abstract}

\begin{section}{Introduction}
\par For each simple finite-dimensional complex Lie algebra $\mathfrak{g}$ and positive integer level $k$, there exists a unitary modular tensor category $\mathcal{C}(\mathfrak{g},k)$ which can be realized as a semisimple quotient of the representation theory of the quantum group $\mathcal{U}_q(\mathfrak{g})$ for a particular root of unity $q$ (see Section \ref{sec:lie}).  At distinguished levels one may construct a new modular tensor category $\mathcal{C}(\mathfrak{g},k)_R^0$ (simple current extension, de-equivariantization, condensation) consisting of local modules over a nontrivial connected \'etale algebra $R\in\mathcal{C}(\mathfrak{g},k)_\text{pt}$.  Theorem \ref{th:simplicity} states with few exceptions, the nondegenerate fusion subcategories of $\mathcal{C}(\mathfrak{g},k)_R^0$ are pointed fusion subcategories or their relative commutants, and identifies restrictive conditions for the existence of exceptional degenerate fusion subcategories.  As a consequence, the modular tensor categories $\mathcal{C}(\mathfrak{g},k)_R^0$ can be decomposed into prime factors using M\"uger's decomposition \cite[Theorem 4.2]{mug1}.  We supply one additional application of Theorem \ref{th:simplicity}: Section \ref{wittrel} describes a complete classification of relations in the Witt group of nondegenerate braided fusion categories generated by the classes $[\mathcal{C}(\mathfrak{g},k)]$ where rank\,$\mathfrak{g}\leq2$.

\par Fusion subcategories are an obligatory invariant of fusion categories.  Each nondegenerate braided fusion category decomposes into a product of prime nondegenerate fusion subcategories (i.e.\!\! contains no nontrivial nondegenerate braided fusion subcategories) though this decomposition is demonstrably not unique unless the category is unpointed \cite[Theorem 4.5]{mug1}.  The classification of fusion subcategories can thus be seen as the first step toward creating a list of elementary factors from which one would hope all nondegenerate braided fusion categories can be built, as such is the case for the classification of finite simple groups.  Most ``exotic'' fusion categories (not arising in an obvious way from representation theory of finite groups/Lie algebras) are of small rank or are constructed in a way which makes a classification of fusion subcategories trivial.  Fusion subcategories of $\mathcal{C}(\mathfrak{g},k)$ are of greater complexity and were classified by Sawin \cite[Theorem 1]{Sawin06}.  The classification we present here is an extension of that for $\mathcal{C}(\mathfrak{g},k)$, though the lack of an \emph{explicit} formula for the fusion rules in $\mathcal{C}(\mathfrak{g},k)_R^0$ forces a vastly different argument to be devised which does not heavily rely on the innate geometry of $\mathfrak{g}$.  We instead argue using the properties of the dimensions of objects in $\mathcal{C}(\mathfrak{g},k)$.

\par In general, the act of passing to the category of local modules $\mathcal{C}_A^0$ over a connected \'etale algebra $A$ in a nondegenerate braided fusion category $\mathcal{C}$ preserves Witt equivalence, a concept introduced by Davydov, M\"uger, Nikshych, and Ostrik in \cite{DMNO}.  Movement between $\mathcal{C}$ and $\mathcal{C}_A^0$ is related to conformal embeddings of vertex operator algebras (e.g. \!\cite[Section 6.2]{DMNO}) and when $A$ is constructed via a Tannakian fusion subcategory of $\mathcal{C}(\mathfrak{g},k)$, $\mathcal{C}(\mathfrak{g},k)_A^0$ is referred to as a simple current extension or fixed-point theory in the parlence of conformal field theory (e.g. \!\cite[Sections 6,7]{schell}).  One also uses this construction to model anyon condensation (e.g. \cite[Chapter 6]{sebas} and \cite{kong}) in the mathematical physics literature.  It is a common belief that the Witt equivalence classes of $\mathcal{C}(\mathfrak{g},k)$ are a generating set for the unitary subgroup of the Witt group of nondegenerate braided fusion categories, but currently a proof is out of reach.  Moreover, identifying fusion subcategories and then decomposing $\mathcal{C}(\mathfrak{g},k)_A^0$ into prime factors is a crucial step in understanding the aforementioned subjects \cite[Section 5.4]{DMNO}.  The cases of $\mathfrak{sl}_2$ \cite{DNO} and $\mathfrak{sl}_3$ \cite{schopieray2017} are already complete, and Witt group relations generated by equivalence classes of these categories have been classified.  Classifying Witt group relations generated by $\mathcal{C}(\mathfrak{g},k)$ additionally relies on a classification of connected \'etale algebras in $\mathcal{C}(\mathfrak{g},k)$ which have classically been known as \emph{quantum subgroups}.  The classification of connected \'etale algebras is only complete at arbitrary levels for $\mathfrak{sl}_2$ and $\mathfrak{sl}_3$ but using a technique introduced by Ocneanu and translated by Ostrik, level bounds have been placed on the existence of ``exceptional'' connected \'etale algebras (Example \ref{ex:ade}) in $\mathcal{C}(\mathfrak{g},k)$.  These bounds were announced for generic $\mathfrak{g}$ by Gannon \cite{banffgannon} after being applied to $\mathfrak{g}_2$ and $\mathfrak{so}_5$ in \cite{schopieray2}.  Together with Corollary \ref{cor:one} at the end of Section \ref{sec:sub}, it will soon be a matter of perseverence to classify Witt group relations amongst equivalence classes generated by arbitrary $\mathcal{C}(\mathfrak{g},k)$.

\par Definitions, concepts, and prior results are summarized throughout Section \ref{sec:pre}. Theorem 1, a classification of nondegenerate fusion subcategories of $\mathcal{C}(\mathfrak{g},k)_R^0$ is proven in Section \ref{sec:sub}, and we apply this result to classify Witt group relations among the equivalence classes $[\mathcal{C}(\mathfrak{g},k)]$ where $\mathfrak{g}$ is of Type $G_2$ or $B_2$ in Section \ref{wittrel}.  There are many ways to expand the results of this paper.  Observations, questions, and open problems are presented in Section \ref{quest}.
\end{section}

\begin{section}{Preliminaries}\label{sec:pre}
\begin{subsection}{Braided fusion categories}
Our main objects of study are \emph{fusion categories} over $\mathbb{C}$, i.e.\! $\mathbb{C}$-linear, semisimple, monoidal, rigid categories with simple monoidal unit $\mathbbm{1}$ and finitely many isomorphism classes of simple objects.  Denote the set of isomorphism classes of simple objects by $\mathcal{O}(\mathcal{C})$.  We will refer the reader to \cite{tcat} for a more thorough treatment of the subject.   Monoidal, or \emph{fusion} products will be denoted by $\otimes$, appended with subscripts to differentiate multiple categories being discussed simultaneously.  Up to equivalence there is a unique fusion category with one isomorphism class of simple objects \cite[Corollary 4.4.2]{tcat} which we will denote by Vec: the category of finite dimensional $\mathbb{C}$-vector spaces.  A \emph{fusion subcategory} is a full abelian subcategory which is closed under monoidal products and quotients.  By \cite[Corollary 4.11.4]{tcat} this is sufficient to ensure the subcategory has all other necessary properties of a fusion category.  Each fusion category $\mathcal{C}$ contains $\mathrm{Vec}$ as a fusion subcategory $\otimes$-generated by $\mathbbm{1}$.  The categories Rep$(G)$, of finite dimensional representations of a finite group $G$, provide another infinite family of fusion categories.  Fusion categories may also be equipped with a \emph{braiding}: a family of natural isomorphisms $c_{X,Y}:X\otimes Y\to Y\otimes X$ for all simple $X$ and $Y$ which satisfy braid-like relations \cite[Definition 8.1.1]{tcat}.  If $c_{X,Y}$ is a braiding, then $\tilde{c}_{X,Y}:=c_{Y,X}^{-1}$ also defines a braiding on $\mathcal{C}$.  We will distinguish these braided fusion categories by denoting the latter category $\mathcal{C}^\text{rev}$.

\par Each fusion category has at least one notion of \emph{dimension} for each object $X$, $\mathrm{FPdim}(X)$, given by the Frobenius-Perron eigenvalue of the action of tensoring with $X$ \cite[Section 3.2]{tcat}.  Those fusion categories imbued with a spherical structure have a second notion of dimension \cite[Definition 4.7.11]{tcat} computed in terms of a corresponding categorical trace.  All categories considered in this exposition are \emph{pseudounitary} \cite[Section 8.4]{ENO} which implies these notions of dimension agree and allows us to denote the dimension of an object $X\in\mathcal{C}$ by $\dim(X)\in\mathbb{R}_{\geq1}$ without confusion.  Objects of dimension 1 are called \emph{simple currents} (or \emph{invertible}) and the fusion subcategory generated by invertible objects of $\mathcal{C}$ will be denoted $\mathcal{C}_\text{pt}$.  The spherical structure of $\mathcal{C}$ also induces \emph{twists} $\theta(X)\in\mathbb{C}$ for all $X\in\mathcal{O}(\mathcal{C})$ which are roots of unity for a modular tensor category $\mathcal{C}$, defined below \cite[Corollary 8.18.2]{tcat}.
\par One distinguishing property of a braided fusion category is whether it is \emph{nondegenerate}, i.e.\! $\mathcal{C}':=\{X\in\mathcal{C}:c_{Y,X}c_{X,Y}=\text{id}_{X\otimes Y}\text{ for all }Y\in\mathcal{C}\}$ is trivial.  The fusion subcategory $\mathcal{C}'$ is called the \emph{centralizer} or \emph{relative commutant} of $\mathcal{C}$ (in $\mathcal{C}$).  The following generalization and notation are borrowed from M\"uger \cite{mug1}.
\begin{definition}
If $\mathcal{D}\subset\mathcal{C}$ is a fusion subcategory, then the \emph{relative commutant of $\mathcal{D}$ inside of $\mathcal{C}$} is the full subcategory $C_\mathcal{C}(\mathcal{D})$ with objects
\begin{equation}
\{X\in\mathcal{C}:c_{Y,X}c_{X,Y}=\text{id}_{X\otimes Y}\text{ for all }Y\in\mathcal{D}\}.
\end{equation}
\end{definition}
Relative commutants $C_\mathcal{C}(\mathcal{D})$ are fusion subcategories of $\mathcal{C}$ \cite[Lemma 2.8]{mug1}.  When $\mathcal{C}'=C_\mathcal{C}(\mathcal{C})\not\simeq\text{Vec}$ we say $\mathcal{C}$ is \emph{degenerate} (or degenerately braided).  Deligne \cite[Corollary 9.9.25]{tcat} proved that if $\mathcal{C}'\simeq\mathcal{C}$ ($\mathcal{C}$ is \emph{symmetric}), then $\mathcal{C}\simeq\text{Rep}(G,\nu)$ where $G$ is a finite group and $\nu\in G$ is a central idempotent describing the braiding on $\mathcal{C}$.  If a braided fusion category of rank $n$ is spherical, one also defines the $n\times n$ \emph{$s$-matrix} consisting of traces of the double-braidings $c_{Y,X}c_{X,Y}$ for all $X,Y\in\mathcal{O}(\mathcal{C})$.  The entries of this matrix $s_{X,Y}$ can also be computed by the balancing equation \cite[Proposition 8.13.8]{tcat}
\begin{equation}\label{balancing}
s_{X,Y}=\theta(X)^{-1}\theta(Y)^{-1}\sum_{Z\in\mathcal{O}(\mathcal{C})}N_{X,Y}^Z\theta(Z)\dim(Z)
\end{equation}
where $N_{X,Y}^Z:=\dim\text{Hom}_\mathcal{C}(X\otimes Y,Z)$ are the \emph{fusion coefficients} of $\mathcal{C}$.  It is known that spherical braided fusion categories are non-degenerate if and only if the $s$-matrix is invertible \cite[Corollary 2.16]{mug1}, in which case the term \emph{modular tensor category} is used.
\end{subsection}

\begin{subsection}{Algebras in fusion categories and their modules}\label{sec:alg}
Understanding complex mathematical objects by how they act on simpler objects has been a major theme of twentieth-century mathematics and this \emph{representation theory} applied to fusion categories is no exception.  That is to say one would like to study simple (semisimple indecomposable) \emph{module categories} over a given fusion category $\mathcal{C}$ \cite[Definition 7.3.1, Section 7.5]{tcat}.  Representation theory may be studied internally to the category $\mathcal{C}$ as well.  Theorem 1 of \cite{Ostrik2003} proves that simple module categories over a fusion category $\mathcal{C}$ arise as categories of modules over certain algebra objects in $\mathcal{C}$ which we describe below.  When these algebras are commutative the categories of modules have an inherent monoidal product and provide a means to create (potentially) new fusion categories from old (often called \emph{quantum subgroups}).  Here we review the theory behind this construction referring the reader to \cite{Ostrik2003} for details.

\begin{definition}\label{def1}
An \emph{algebra} $A$ in a fusion category $\mathcal{C}$ is an object with multiplication morphism $m:A\otimes A\to A$ and unit morphism $u:\mathbbm{1}\to A$ satisying associativity and unital constraints \cite[Definition 7.8.1]{tcat}.  If $\mathcal{C}$ is braided, then $A$ is \emph{commutative} when $m\,c_{A,A}=m$.
\end{definition}

\par If $A$ is a commutative algebra in a braided fusion category $\mathcal{C}$, the category $\mathcal{C}_A$ of right $A$-modules (or left $A$-modules, or $A$-bimodules) in $\mathcal{C}$ \cite[Definition 8]{Ostrik2003} has a well-defined monoidal product $\otimes_A$.  In \cite[Theorem 1.5]{KiO}, for any $M,N\in\mathcal{C}_A$, the product $M\otimes_A N\in\mathcal{C}_A$ is defined (as an object) as a quotient of $M\otimes N\in\mathcal{C}$.  It will not be necessary in this exposition to understand $\otimes_A$ on any deeper level.  The existence of a simple monoidal unit $\mathbbm{1}_A\in\mathcal{C}_A$ requires the following assumption.

\begin{definition}
An algebra $A$ in a fusion category $\mathcal{C}$ is \emph{connected} (or \emph{haploid}) if $A$ contains $\mathbbm{1}$ as a summand with multiplicity 1.
\end{definition}

\par Thus a connected commutative algebra $A$ in a braided fusion category $\mathcal{C}$ produces a finite tensor category $\mathcal{C}_A$, and when $A$ is not connected, the resulting category of right $A$-modules $\mathcal{C}_A$ is a finite multi-tensor category.  Nonetheless any commutative algebra which is not connected canonically decomposes into a sum of connected commutative algebras, and so for most purposes it suffices to only discuss commutative algebras which are connected.

\par We require one last adjective to ensure that $\mathcal{C}_A$ is a fusion category.  As a counterexample \cite[Remark 3.7]{DMNO}, let $A$ be the group algebra of $\mathbb{Z}/2\mathbb{Z}$, a connected commutative algebra in $\mathcal{C}:=\mathrm{Vec}_{\mathbb{Z}/2\mathbb{Z}}$ as a symmetrically braided fusion category over a field \emph{of characteristic 2} (recall that the main body of this exposition takes place in characteristic zero).  Then the category of $A$-bimodules is equivalent to $\mathrm{Rep}(\mathbb{Z}/2\mathbb{Z})$ as a finite tensor category which is not semisimple by Maschke's Theorem.

\begin{definition}\label{def3}
An algebra $A$ in a fusion category $\mathcal{C}$ is \emph{separable} if $m:A\otimes A\to A$ splits as a morphism of $A$-bimodules.  Separable commutative algebras are known as \emph{\'etale algebras}.
\end{definition}

\par Proposition 2.7 of \cite{DMNO} states that $\mathcal{C}_A$ is semisimple when $A$ is separable, therefore Definitions \ref{def1}--\ref{def3} describe the desired machine for creating fusion categories $\mathcal{C}_A$ from fusion categories $\mathcal{C}$ and connected \'etale algebras $A\in\mathcal{C}$.  The category $\mathcal{C}\simeq\mathcal{C}_\mathbbm{1}$, where $\mathbbm{1}$ receives its algebra structure maps from the unit axioms of a monoidal category \cite[Definition 2.1.1]{tcat}, is a trivial application of this construction.

\begin{example}[Regular algebras]\label{ex:reg}
Consider the Tannakian fusion category $\mathrm{Rep}(G)$ for some finite group $G$.  The object $G^\ast\in\mathrm{Rep}(G)$ (complex functions on $G$) has the structure of a connected \'etale algebra \cite[Example 3.3(i)]{DMNO}.  In what follows we will be primarily considering algebras of this type (see Example \ref{ex:ade}).
\end{example}

\par To finish this subsection, we note that $\mathcal{C}_A$ is typically not braided for a braided fusion category $\mathcal{C}$ and connected \'etale algebra $A\in\mathcal{C}$.  The right $A$-modules $A\in\mathcal{C}_A$ with structure map $\rho:M\otimes A\to M$ which prevent the braiding of $\mathcal{C}$ from translating to $\mathcal{C}_A$ are those such that $\rho$ does not commute with the double-braiding $c_{A,M}c_{M,A}$ (refer to the proof of \cite[Theorem 1.10]{KiO} and the subsequent notes).

\begin{definition}
If $\mathcal{C}$ is a braided fusion category and $A\in\mathcal{C}$ a connected \'etale algebra, then $M\in\mathcal{C}_A$ with structure map $\rho:M\otimes A\to M$ is \emph{local} (or \emph{dyslectic}) if $\rho\,c_{A,M}c_{M,A}=\rho$.
\end{definition}

\par We will denote the full subcategory in $\mathcal{C}_A$ of local $A$-modules in $\mathcal{C}$ by $\mathcal{C}_A^0$, which is a braided fusion category by \cite[Theorem 1.10]{KiO}, initially due to Pareigis \cite{pareigis}.  It was proven that $\mathcal{C}_A^0$ is modular when $\mathcal{C}$ is modular in \cite[Theorem 4.5]{KiO} with the assumption that $\theta_A=\mathrm{id}_A$.  The spherical (ribbon) structure and braiding on $\mathcal{C}^0_A$ are both inherited from $\mathcal{C}$.  But the trivial twist assumption on $A$ is not strictly necessary as shown in \cite[Corollary 3.30]{DMNO}, though a trivial twist is guaranteed in the pseudounitary case \cite[Lemma 2.2.4]{schopieray2}.
\end{subsection}

\begin{subsection}{Fusion categories from Lie Theory}\label{sec:lie}

\par We will reference \cite[Section 3.3]{BaKi} as needed for an introduction to these fusion categories.  For a perspective from conformal field theory one may refer to \cite[Chapters 14--17]{conformal} and the references contained within.  For concepts from the classical theory of Lie algebras one may refer to \cite[Sections 13,21--24]{hump}.

\par Let $\mathfrak{h}$ be a Cartan subalgebra of $\mathfrak{g}$, a finite-dimensional complex simple Lie algebra, and $\langle.\,,.\rangle$ be the invariant form on $\mathfrak{h}^\ast$ normalized so $\langle\alpha,\alpha\rangle=2$ for short roots \cite[Section 5]{hump}.  It has been commonplace to study the modular tensor categories $\mathcal{C}(\mathfrak{g},k)$, consisting of highest weight integrable $\hat{\mathfrak{g}}$-modules of level $k\in\mathbb{Z}_{\geq1}$ where $\hat{\mathfrak{g}}$ is the corresponding affine Lie algebra \cite{kacpet}.  Simple objects of $\mathcal{C}(\mathfrak{g},k)$ are labelled by weights $\lambda\in\Lambda_0\subset\mathfrak{h}^\ast$, the \emph{Weyl alcove} of $\mathfrak{g}$ at level $k$.  Simple objects and their representative weights will be used interchangably but can be easily determined by context.  Equivalently \cite{fink} one considers the semisimple representation category of Lusztig's quantum group $\mathcal{U}_q(\mathfrak{g})$ when $q=e^{\pi i/m(k+h^\vee)}$ where $h^\vee$ is the dual coxeter number, and $m=1,2,3$ is a normalization dependent on $\mathfrak{g}$ \cite[Chapter 7]{BaKi}.

\par The dimension of the simple object corresponding to the weight $\lambda\in\Lambda_0$ is given by the \emph{quantum Weyl dimension formula} \cite[Equation (3.3.5)]{BaKi} which involves the quantum integers $[n]=(q^n-q^{-n})/(q-q^{-1})$ where $q$ is defined above in terms of $\mathfrak{g}$ and $k$ and will be clear from context.  The full twist on a simple object $\lambda\in\Lambda_0$ is given by $\theta(\lambda)=q^{\langle\lambda,\lambda+2\rho\rangle}$ \cite[Theorem 3.3.20]{BaKi} which is a root of unity depending on $\mathfrak{g}$, $k$, and $\lambda$, and the full modular data for $\mathcal{C}(\mathfrak{g},k)$ can be obtained from the classical Kac-Peterson formulas \cite{kacpet}.  Fusion rules for the categories $\mathcal{C}(\mathfrak{g},k)$ are also given by the \emph{quantum Racah formula} \cite[Corollary 8]{Sawin03}.  That is if $\lambda,\gamma,\mu\in\Lambda_0$, the multiplicity of $\mu$ in the product $\lambda\otimes\gamma$ is the fusion coefficient
\begin{equation}N_{\lambda,\gamma}^\mu:=\sum_{\tau\in\mathfrak{W}_0}(-1)^{\ell(\tau)}m_\lambda(\tau(\mu)-\gamma)\end{equation}
where $m_\lambda(\mu)$ is the (classical) dimension of the $\mu$-weight space of the finite dimensional irreducible representation of highest weight $\lambda$.

\begin{example}[ADE classification]\label{ex:ade}
\par In \cite{KiO} connected \'etale algebras in $\mathcal{C}(\mathfrak{sl}_2,k)$ are organized into an ``ADE classification scheme'' ($\mathrm{E}_7$ and $\mathrm{D}_n$ with $n$ odd correspond to noncommutative algebras in this classification) coined by Cappelli, Itzykson, and Zuber \cite{zuber} (see also \cite{Cappelli:2010}) in the language of modular invariant partition functions.  The connected \'etale algebra of ``Type A'' is the trivial one given by the unit object $\mathbbm{1}\in\mathcal{C}(\mathfrak{sl}_2,k)$.  Those connected \'etale algebras of ``Type D'' arise as the regular algebra (see Example \ref{ex:reg}) of Tannakian $\mathcal{C}(\mathfrak{sl}_2,k)_\text{pt}$ when $4\mid k$, and there are two remaining connected \'etale algebras of ``Type E'' in $\mathcal{C}(\mathfrak{sl}_2,10)$ and $\mathcal{C}(\mathfrak{sl}_2,28)$ which do not belong in either of the above cases, and are rightly labelled as \emph{exceptional}.  The connection between connected \'etale algebras $R\in\mathcal{C}(\mathfrak{sl}_2,k)$ and ADE Dynkin diagrams is seen by considering the fusion graph of the free module $\eta\otimes R$ where $\eta$ is the 2-dimensional fundamental representation of $\mathfrak{sl}_2$.  In the trivial case of $R=\mathbbm{1}$, the fusion graph of $\eta\otimes R\cong\eta$ recovers the ``Type A'' fusion graph for $\eta$ in $\mathcal{C}(\mathfrak{sl}_2,k)$.

\end{example}

\par We will borrow this language of the classification of connected \'etale algebras in $\mathcal{C}(\mathfrak{sl}_2,k)$ to organize connected \'etale algebras for general $\mathcal{C}(\mathfrak{g},k)$.  To this end, recall \cite[Theorem 3]{Sawin03} that $\mathcal{C}(\mathfrak{g},k)_\text{pt}$ has fusion rules matching the group multiplication of $Z_\mathfrak{g}$, the center of the simply-connected compact Lie group $G$ corresponding to $\mathfrak{g}$ \cite[Chapters II-III]{bump}.

\begin{definition}
If $H\subset Z_\mathfrak{g}$ and $\mathrm{Rep}(H)\subset\mathcal{C}(\mathfrak{g},k)_\text{pt}$ is a Tannakian fusion subcategory, we refer to the regular algebra $R\in\mathrm{Rep}(H)$ (see Example \ref{ex:reg}) as a connected \'etale algebra in $\mathcal{C}(\mathfrak{g},k)$ of \emph{Type D}.
\end{definition}

\begin{note}
When $Z_\mathfrak{g}$ is a simple group, it will be implied that $H=Z_\mathfrak{g}$.  In Type A and Type D$_n$ when $n$ is odd, we will refer to $H$ by its order as this uniquely determines $H$, and in Type D$_n$ when $n$ is even, we will refer to the three $\mathbb{Z}/2\mathbb{Z}$ subgroups of $Z_{\mathfrak{so}_n}$ as the ``diagonal'' and ``off-diagonal'' subgroups of $\mathbb{Z}/2\mathbb{Z}\times\mathbb{Z}/2\mathbb{Z}$.  When $\mathfrak{g}$ is of Type $E_8$, $F_4$, or $G_2$, then $Z_\mathfrak{g}$ is trivial and there are no nontrivial connected \'etale algebras of Type D.
\end{note}
\end{subsection}

\begin{subsection}{De-equivariantization and algebras of Type D}\label{sec:d}

\par When a fusion category $\mathcal{C}$ is equipped with $\Gamma:\text{Cat}(G)\to\text{Aut}_\otimes(\mathcal{C})$, an action of a finite group $G$ in the sense of \cite[Section 4.15]{tcat}, one may construct the category $\mathcal{C}^G$ of $G$-equivariant objects of $\mathcal{C}$.  Objects of this category are pairs $(X,\{\alpha_g\}_{g\in G})$ where $\alpha_g:\left.\Gamma\right|_g(X)\stackrel{\sim}{\to} X$ describes the $G$-equivariant structure on the object $X$. This construction produces a larger category in some sense, as a single object could have many equivariant structures.  Equivariantization retains  properties of the input category; in particular $\mathcal{C}^G$ is fusion if $\mathcal{C}$ is.  When $\mathcal{C}$ is fusion one can also make the factor of enlargement of $\mathcal{C}$ via $G$-equivariantization precise.  Proposition 4.26 of \cite{DGNO} states that $\text{FPdim}(\mathcal{C}^G)=|G|\cdot\text{FPdim}(\mathcal{C})$.

\begin{example}
For each finite group $G$ there is a unique (trivial) action on Vec.  An object of $\text{Vec}^G$ is a vector space $V$ and $\rho:G\to\text{End}(V)$ satisfying the coherence diagram of \cite[Definition 2.7.2]{tcat}, i.e.\! $\rho(gh)=\rho(g)\rho(h)$.  Hence $\text{Vec}^G\simeq\text{Rep}(G)$.  In elementary abstract algebra, $\text{FPdim}(\text{Rep}(G))=|G|\cdot\text{FPdim}(\text{Vec})=|G|$ is a familiar consequence of the theorems of Maschke and Artin-Wedderburn.
\end{example} 
\noindent For the purposes of this exposition we are considering the mutually inverse construction of equivariantization called \emph{de-equivariantization} \cite[Theorem 8.23.3]{tcat} and denoted by $\mathcal{C}_G$.  When there exists a Tannakian fusion subcategory $\mathcal{T}:=\text{Rep}(G)\subset\mathcal{C}$ the de-equivariantization $\mathcal{C}_G$ is the category of right $R$-modules in $\mathcal{C}$ where $R$ is the regular algebra of $\mathcal{T}$ \cite[Example 2.8]{DMNO}.  This motivates the seemingly ambiguous notation for categories of modules over algebras of Type D, as $\mathcal{C}_G\simeq\mathcal{C}_R$.  The act of de-equivariantization therefore does not directly transport the braiding of a fusion category $\mathcal{C}$ to $\mathcal{C}_G$: as explained in Section \ref{sec:alg}, the category of right $R$-modules for an algebra of Type D in $\mathcal{C}$ is not braided in general.  But Proposition 4.56(i) of \cite{DGNO} implies that, for Tannakian $\mathcal{T}\subset\mathcal{C}$ with regular algebra $R\in\mathcal{T}$, the category of local modules $\mathcal{C}^0_R$ is braided equivalent to the de-equivariantization of the relative commutant $(C_\mathcal{C}(\mathcal{T}))_G\subset\mathcal{C}_G$. One is now free to interpret $\mathcal{C}(\mathfrak{g},k)_R$ and $\mathcal{C}(\mathfrak{g},k)_R^0$ in either the language of local modules or that of de-equivariantization (see Examples 3.8, and 3.14 of \cite{DMNO}).

\begin{example}[de-equivariantization of $\mathcal{C}(\mathfrak{g},k)$ and the root lattice]\label{ex:deeeq}  In the case of the pseudounitary modular tensor categories $\mathcal{C}:=\mathcal{C}(\mathfrak{g},k)$ introduced in Section \ref{sec:lie} one can be precise about the relative commutant $C_\mathcal{C}(\mathcal{T})$ where $\mathcal{T}$ is a Tannakian fusion subcategory.  Corollary 6.9 of \cite{galaki} states that $C_\mathcal{C}(\mathcal{C}_\text{pt})\simeq\mathcal{C}_\text{ad}$, the \emph{adjoint} subcategory of $\mathcal{C}$, generated by $X\otimes X^\ast$ for all simple $X\in\mathcal{O}(\mathcal{C})$.  In other words, $\mathcal{C}_\mathrm{ad}$ is the trivial component for the universal grading on $\mathcal{C}$ \cite[Section 3.2]{galaki}.  The adjoint subcategory of $\mathcal{C}$ corresponds to those simple objects with corresponding dominant weights from the \emph{root lattice} of $\mathfrak{g}$, a proper sublattice of the weight lattice when $\mathcal{C}(\mathfrak{g},k)_\text{pt}\neq\mathrm{Vec}$.  In particular this implies that when $\mathcal{C}_\text{pt}$ is Tannakian with regular algebra $R$, then the category of local modules $\mathcal{C}_R^0$ is generated by the right $A$-modules with underlying objects whose corresponding weights lie in the root lattice.
\par Furthermore, if Tannakian $\mathcal{T}$ is a proper fusion subcategory of $\mathcal{C}_\text{pt}$ with regular algebra $R'$, then $C_\mathcal{C}(\mathcal{T})\supset C_\mathcal{C}(\mathcal{C}_\text{pt})\simeq\mathcal{C}_\text{ad}$.  Hence the simple objects with corresponding dominant weights in the root lattice are contained in the relative commutant of \emph{every} pointed fusion subcategory of $\mathcal{C}$, and moreover $\mathcal{C}_R^0$ is a fusion subcategory of $\mathcal{C}_{R'}^0$.
\end{example}

\end{subsection}

\begin{subsection}{Witt group of nondegenerate braided fusion categories}\label{sec:wittdef}

\par The Witt group of nondegenerate braided fusion categories $\mathcal{W}$ is an algebraic structure which organizes nondegenerate braided fusion categories $\mathcal{C}$ into \emph{Witt equivalence classes} (denoted $[\mathcal{C}]$) where, roughly speaking, the Witt equivalence class of $\mathcal{C}$ is the collection of all nondegenerate braided fusion categories which are related by the local module construction defined in Section \ref{sec:alg}.  The following definition is equivalent \cite[Corollary 5.9]{DMNO} to the original definition due to Davydov, M\"uger, Nikshych, and Ostrik given in \cite[Section 5.2]{DMNO}.

\begin{definition}
If $\mathcal{C}_1$ and $\mathcal{C}_2$ are nondegenerate braided fusion categories, then $\mathcal{C}_1$ and $\mathcal{C}_2$ are Witt equivalent if there exist connected \'etale algebras $R_1\in\mathcal{C}_1$ and $R_2\in\mathcal{C}_2$ such that $(\mathcal{C}_1)_{R_1}^0$ and $(\mathcal{C}_2)_{R_2}^0$ are braided equivalent.
\end{definition}

\par Witt equivalence classes can be characterized in terms of Drinfeld centers.  That is $[\mathcal{C}_1]=[\mathcal{C}_2]$ if and only if $\mathcal{C}_1\boxtimes(\mathcal{C}_2)^\text{rev}$ is braided equivalent to $\mathcal{Z}(\mathcal{A})$ for some fusion category $\mathcal{A}$, where $\boxtimes$ is the \emph{(Deligne) tensor product} \cite[Section 1.11]{tcat}.  In practice this definition is difficult to utilize due to the fact that little is known about the fusion categories whose centers arise due to Witt group relations, except in particular situations such as $\mathcal{Z}(\mathcal{C}_A)\simeq\mathcal{C}\boxtimes(\mathcal{C}_A^0)^\text{rev}$ \cite[Corollary 3.30]{DMNO} when $A$ is a connected \'etale algebra in a nondegenerate braided fusion category $\mathcal{C}$ (i.e. \!$[\mathcal{C}]=[\mathcal{C}_A^0]$).

\par The set of Witt equivalence classes is a group with commutative associative operation $[\mathcal{C}][\mathcal{D}]:=[\mathcal{C}\boxtimes\mathcal{D}]$ and inverses $[\mathcal{C}]^{-1}=[\mathcal{C}^\text{rev}]$.  The structure of the Witt group is known in broad strokes.  It is a 2-primary abelian group containing the subgroups $\mathcal{W}_\text{pt}$, consisting of Witt equivalence classes of pointed modular tensor categories, which is contained in $\mathcal{W}_\text{un}$, consisting of Witt equivalence classes of unitary modular tensor categories.  All relations in $\mathcal{W}_\text{pt}$ are known due to its relation to the classical Witt group of finite abelian groups and nondegenerate quadratic forms \cite[Section 5.3]{DMNO}.  Some nonpointed relations in $\mathcal{W}_\text{un}$ are known due to conformal embeddings of vertex operator algebras \cite{bb,kw,sw}.  Each conformal embedding $(\mathfrak{g}_1)_{k_1}\subset(\mathfrak{g}_2)_{k_2}$ of complex finite dimensional simple Lie algebras (where the notation implies $g_2$-modules of level $k_2$ restrict to a $\mathfrak{g}_1$-module of level $k_1$) implies $[\mathcal{C}(\mathfrak{g}_1,k_1)]=[\mathcal{C}(\mathfrak{g}_2,k_2)]$.  This construction extends to embeddings of products of simple Lie algebras as well.  For complete details, refer to Section 6.2 of \cite{DMNO} and references within as we will use this fact in both the proof of our main theorem and in its application to Witt group relations.  Only those relations between Witt equivalence classes of $\mathcal{C}(\mathfrak{sl}_2,k)$ and $\mathcal{C}(\mathfrak{sl}_3,k)$ have been completely classified (this contains some subset of the pointed classification).

\par To classify relations in $\mathcal{W}$, one studies the closely-related super Witt group $s\mathcal{W}$ of slightly degenerate braided fusion categories.  A braided fusion category $\mathcal{C}$ is slightly degenerate if $\mathcal{C}'\simeq\text{sVec}$, the category of super vector spaces \cite[Example 8.3.6]{tcat}.  The group operation in $s\mathcal{W}$ is the Deligne tensor product relative to $\text{sVec}$ \cite[Section 4.1]{DMNO} and its structure is known explicitly \cite[Proposition 5.18]{DNO}:
\begin{equation}
s\mathcal{W}\simeq s\mathcal{W}_\text{pt}\bigoplus s\mathcal{W}_2\bigoplus s\mathcal{W}_\infty,
\end{equation}
where $s\mathcal{W}_\text{pt}$ is the subgroup of slightly-degenerate pointed premodular categories, $s\mathcal{W}_2$ is an elementary abelian 2-group, and $s\mathcal{W}_\infty$ is a free abelian group of countable rank.  There is a group homomorphism $\Sigma:\mathcal{W}\to s\mathcal{W}$ such that $[\mathcal{C}]\mapsto[\mathcal{C}\boxtimes\text{sVec}]$ whose kernel is generated by the Witt equivalence classes of the Ising categories \cite[Appendix B]{DGNO}.
\par Remark 5.11 of \cite{DNO} summarizes how Witt group relations in $\mathcal{W}$ can be classified: there are no non-trivial relations in $s\mathcal{W}$ except of the form $[\mathcal{C}]=[\mathcal{C}]^{-1}$ for some slightly nondegenerate braided fusion category $\mathcal{C}$ which (i) is not pointed (ii) has no fusion subcategories containing $\text{sVec}$ aside from $\text{sVec}$ and $\mathcal{C}$, and (iii) contains no nontrivial connected \'etale algebras.  The third condition on braided fusion categories is often called \emph{completely anisotropic}.  Hence if one can identify simple, completely anisotropic representatives of Witt equivalence classes in $\mathcal{W}$ which are not pointed, and demonstrate that their image under the homomorphism $\Sigma$ satisfies properties (i)--(iii) above then the only relations amongst these representative categories are their Witt classes being order 2.

\begin{example}\label{ex:g}
Consider the categories $\mathcal{C}(\mathfrak{g}_2,1)$ and $\mathcal{C}(\mathfrak{g}_2,2)$ of ranks 2 and 4, respectively.  These categories are are not pointed using the numerical data found in \cite[Section 2.3.4]{schopieray2}, they are simple by \cite[Theorem 1]{Sawin06}, and are completely anisotropic by \cite[Lemma 2.2.3]{schopieray2} as they contain no nontrivial objects with trivial twist.  As such their images under the homomorphism $\Sigma$ are not pointed and have no proper fusion categories containing $\text{sVec}$ besides $\text{sVec}$ itself from the fact that $\mathcal{C}(\mathfrak{g}_2,1)$ and $\mathcal{C}(\mathfrak{g}_2,2)$ were unpointed.  Lastly we note that the \emph{multiplicative central charges} \cite[Section 8.15]{tcat} of these categories are not $\pm1$.  As central charge is an invariant of (pseudounitary) Witt equivalence \cite[Lemma 5.27]{DMNO} which is multiplicative across Deligne tensor products, this implies the order of the equivalency classes of these categories cannot be 2.  Moreover $[\mathcal{C}(\mathfrak{g}_2,1)]$ and $[\mathcal{C}(\mathfrak{g}_2,2)]$ generate a subgroup of $\mathcal{W}$ isomorphic to $\mathbb{Z}^2$.
\end{example}
%
\end{subsection}

\end{section}


\begin{section}{Fusion subcategories of $\mathcal{C}(\mathfrak{g},k)_R^0$}\label{sec:sub}

\par A fusion category (braided or not) is \emph{simple} if it contains no nontrivial proper fusion subcategories; a braided fusion category is \emph{prime} if it contains no nontrivial \emph{nondegenerate} fusion subcategories.  The concept and title of prime nondegenerate braided fusion categories is justified because each nondegenerate braided fusion category factors into a $\boxtimes$-product of primes \cite[Proposition 2.2]{DMNO} (using \cite[Theorem 3.13]{DGNO} and \cite[Theorems 4.2, 4.5]{mug1}).   Braided fusion categories $\mathcal{C}$ always contain the fusion subcategory $\mathcal{C}'\subset\mathcal{C}$ of degenerate (or transparent) objects (see Section \ref{sec:pre}).  Fusion subcategories under the name of closed subsets of $\mathcal{C}(\mathfrak{g},k)$ were classified by Sawin \cite[Theorem 1]{Sawin06}.  The fusion rules for the de-equivariantizations $\mathcal{C}(\mathfrak{g},k)_R^0$ are less tractable, not being described explicitly (or geometrically) by the quantum Racah formula as was used in \cite{Sawin06}, but a quotient of the fusion rules of $\mathcal{C}(\mathfrak{g},k)$ in \cite[Theorem 1.5]{KiO}.  These fusion rules could also be accessible from explicit computation of the $s$-matrices of $\mathcal{C}(\mathfrak{g},k)_R^0$ as in \cite{baver1996fusion,fuchs1996matrix} followed by application of the Verlinde formula, but proving properties in generality from this method seems unlikely.  The main result of this section (Theorem \ref{th:simplicity}) is a classification of nondegenerate fusion subcategories of $\mathcal{C}(\mathfrak{g},k)_R^0$ for arbitrary nontrivial connected \'etale algebras $R$ of Type D (Section \ref{sec:alg}), and restrictive conditions on when exceptional degenerate fusion categories exist.  Sawin's classification implies the case when $R$ is the algebra $\mathbbm{1}$ and thus $\mathcal{C}(\mathfrak{g},k)_\mathbbm{1}^0\simeq\mathcal{C}(\mathfrak{g},k)$, and the arguments below rely heavily on \cite{Sawin06}. 

\begin{example}\label{ex:su42}
The category $\mathcal{C}(\mathfrak{sl}_4,4)_\text{pt}$ has $\mathbb{Z}/4\mathbb{Z}$ fusion.  We will use the labelling conventions for weights as in \cite[Section 3.1]{gannon2002}.  There is a subgroup $H:=\mathbb{Z}/2\mathbb{Z}\subset \mathbb{Z}/4\mathbb{Z}=Z_{\mathfrak{sl}_4}$ whose simple objects correspond to the weights $\mathbbm{1}$ and $4\lambda_2$.  In the notation of \cite{Sawin06}, Sawin's classification states the fusion subcategories of $\mathcal{C}(\mathfrak{sl}_4,4)$ are the three pointed fusion subcategories, and their relative commutants, respectively:
\begin{equation}
\text{Vec}\subset\Delta_H\subset\mathcal{C}(\mathfrak{sl}_4,4)_\text{pt}\qquad\text{and}\qquad\mathcal{C}(\mathfrak{sl}_4,4)\supset
\Gamma_H\supset\mathcal{C}(\mathfrak{sl}_4,4)_\text{ad}.
\end{equation}
The simple current $4\lambda_2$ has trivial twist, so there exists a unique structure of a nontrivial connected \'etale algebra of Type D on $R:=\mathbbm{1}\oplus4\lambda_2$.  One would expect there to be four fusion subcategories of $\mathcal{C}(\mathfrak{sl}_4,2)_R^0$ corresponding to the fusion subcategories of $(\mathcal{C}(\mathfrak{sl}_4,2)_R^0)_\mathrm{pt}$, of rank $|Z_\mathfrak{sl_4}/H|=2$, and their relative commutants.  Using \cite[Lemma 3.4]{KiO} and \cite[Lemma 4]{Ostrik2003}, one computes $\mathcal{C}(\mathfrak{sl}_4,4)_R^0$ is rank 14 and $(\mathcal{C}(\mathfrak{sl}_4,4)_R^0)_\mathrm{pt}\simeq\mathrm{sVec}$ whose relative commutant is $(\mathcal{C}(\mathfrak{sl}_4,4)_R^0)_\mathrm{ad}$, a rank 7 fusion subcategory by \cite[Corollary 6.9]{galaki}.  But $(\mathcal{C}(\mathfrak{sl}_4,4)_R^0)_\mathrm{ad}$ is degenerate by the double-centralizer property \cite[Theorem 3.2(i)]{mug1}, and contains two unexpected degenerate fusion subcategories of rank 4, who are the relative commutant of one another.  The additional degenerate fusion subcategories can only occur in Type A and D by Theorem \ref{th:simplicity} below, and the general classification continues the pattern illustrated here.
\end{example}

\begin{theorem}\label{th:simplicity}
Let $R$ be the regular algebra of a Tannakian fusion subcategory $\rep(H)\subset\mathcal{C}(\mathfrak{g},k)_\text{pt}$ for a complex finite dimensional simple Lie algebra $\mathfrak{g}$, level $k\in\mathbb{Z}_{\geq1}$, and nontrivial $H\subset Z_\mathfrak{g}$.  A fusion subcategory of $\mathcal{C}(\mathfrak{g},k)_R^0$ is either a pointed fusion subcategory corresponding to a subgroup of $\mathcal{O}((\mathcal{C}(\mathfrak{g},k)_R^0)_\mathrm{pt})\subset Z_\mathfrak{g}/H$, the relative commutant of a pointed fusion subcategory, or one of the following exceptions:
\begin{itemize}
\item[(a)] fusion subcategories of $\mathcal{C}(\mathfrak{sl}_3,3)_R^0$, each generated by a nontrivial invertible object,
\item[(b)] fusion subcategories of $\mathcal{C}(\mathfrak{so}_{2n+1},2)_R^0$ in one-to-one correspondence with proper nontrivial subgroups of $\mathbb{Z}/(2n+1)\mathbb{Z}$,
\item[(c)] fusion subcategories of $\mathcal{C}(\mathfrak{so}_{2n+1},4)_R^0$ which are equivalent (as fusion categories) to $\mathcal{C}(\mathfrak{sl}_2,2n+1)_\text{ad}$,
\item[(d)] fusion subcategories of $\mathcal{C}(\mathfrak{sp}_8,2)_R^0$ which are equivalent (as fusion categories) to $\mathcal{C}(\mathfrak{sl}_2,5)_\text{ad}$,
\item[(e)] with $n\in\mathbb{Z}_{\geq4}$ and $H$ the diagonal $\mathbb{Z}/2\mathbb{Z}\subset\mathbb{Z}/2\mathbb{Z}\times\mathbb{Z}/2\mathbb{Z}=Z_{\mathfrak{so}_{2n}}$, the fusion subcategories of $\mathcal{C}(\mathfrak{so}_{2n},2)_R^0$ in one-to-one correspondence with nontrivial proper subgroups of $\mathbb{Z}/2n\mathbb{Z}$,
\item[(f)] with $n\equiv0\pmod{4}$ and $H=Z_{\mathfrak{so}_{2n}}$, the fusion subcategories of $\mathcal{C}(\mathfrak{so}_{2n},2)_R^0$ in one-to-one correspondence with nontrivial proper subgroups of $\mathbb{Z}/(n/2)\mathbb{Z}$,
\item[(g)] with $n\equiv0\pmod{4}$ where $H$ is an off-diagonal $\mathbb{Z}/2\mathbb{Z}\subset\mathbb{Z}/2\mathbb{Z}\times\mathbb{Z}/2\mathbb{Z}$, the fusion subcategories of $\mathcal{C}(\mathfrak{so}_{2n},2)_R^0$ in one-to-one correspondence with nontrivial proper fusion subcategories of $\mathcal{C}(\mathfrak{so}_{n/2},2)$,
\item[(h)] with $n\in\mathbb{Z}_{\geq4}$ even and $H=Z_{\mathfrak{so}_{2n}}$, the fusion subcategories of $\mathcal{C}(\mathfrak{so}_{2n},4)_R^0$ equivalent (as fusion categories) to $\mathcal{C}(\mathfrak{sl}_2,2n)_S^0$ where $S$ is the unique nontrivial connected \'etale algebra of Type D in $\mathcal{C}(\mathfrak{sl}_2,2n)$, and
\item[(i)] any exceptional degenerate braided fusion subcategories of relative commutants of degenerate pointed fusion subcategories in Types A and D when $H\subsetneq Z_\mathfrak{g}$.
\end{itemize}
\end{theorem}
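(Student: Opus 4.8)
The plan is to reduce the problem to Sawin's classification of fusion subcategories of $\mathcal{C}(\mathfrak{g},k)$ and then to control the de-equivariantizations purely through dimension data of $\mathcal{C}(\mathfrak{g},k)$, where the quantum Weyl dimension formula and the twists $\theta(\lambda)=q^{\langle\lambda,\lambda+2\rho\rangle}$ are available but the fusion rules of $\mathcal{C}(\mathfrak{g},k)_R^0$ are not. First I would use the braided equivalence $\mathcal{C}(\mathfrak{g},k)_R^0\simeq(C_\mathcal{C}(\rep(H)))_H$ recorded in Section \ref{sec:d} together with the standard Galois correspondence for de-equivariantization \cite{DGNO}. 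Since $\rep(H)$ lies in the M\"uger center of $C_\mathcal{C}(\rep(H))$ by definition of the centralizer, the Tannakian subcategory supplies a genuine $H$-action, and the correspondence yields an inclusion-preserving bijection between fusion subcategories of $\mathcal{C}(\mathfrak{g},k)_R^0$ and fusion subcategories $\mathcal{L}$ of $\mathcal{C}(\mathfrak{g},k)$ satisfying $\rep(H)\subseteq\mathcal{L}\subseteq C_\mathcal{C}(\rep(H))$. This places the entire subcategory lattice of $\mathcal{C}(\mathfrak{g},k)_R^0$ inside the lattice already classified by Sawin.

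Second I would read off the interval $[\rep(H),C_\mathcal{C}(\rep(H))]$ from \cite[Theorem 1]{Sawin06}. Because $\rep(H)\subseteq\mathcal{C}_\text{pt}$ forces $\mathcal{C}_\text{ad}=C_\mathcal{C}(\mathcal{C}_\text{pt})\subseteq C_\mathcal{C}(\rep(H))$ (Example \ref{ex:deeeq}), and because Sawin's theorem shows $\mathcal{C}_\text{ad}$ is simple away from a finite list of $(\mathfrak{g},k)$, every generic member of the interval is either a pointed subcategory $\Delta_S$ or a relative commutant $\Gamma_S=C_\mathcal{C}(\Delta_S)\supseteq\mathcal{C}_\text{ad}$, both indexed by subgroups $S$ with $H\subseteq S\subseteq H^\perp$, where $H^\perp\subseteq Z_\mathfrak{g}$ is the subgroup of simple currents centralizing $H$. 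Using that de-equivariantization sends pointed subcategories to pointed subcategories, preserves relative commutants, and divides Frobenius--Perron dimension by $|H|$, I would verify that $\Delta_S$ yields the pointed subcategory of $\mathcal{C}(\mathfrak{g},k)_R^0$ indexed by $S/H$, that the maximal case $\rep(H^\perp)$ yields $(\mathcal{C}(\mathfrak{g},k)_R^0)_\text{pt}$ so that $\mathcal{O}((\mathcal{C}(\mathfrak{g},k)_R^0)_\text{pt})=H^\perp/H\subseteq Z_\mathfrak{g}/H$, and that $\Gamma_S$ yields the relative commutant of that pointed subcategory. This produces the two generic families of the statement.

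Third, the exceptions (a)--(h) arise from the finitely many exceptional closed subsets in Sawin's list that are neither pointed nor contain $\mathcal{C}_\text{ad}$. For each admissible $H$ I would intersect this exceptional list with the interval $[\rep(H),C_\mathcal{C}(\rep(H))]$ and identify the resulting de-equivariantizations one $(\mathfrak{g},k)$ at a time. Since the fusion rules of $\mathcal{C}(\mathfrak{g},k)_R^0$ are not explicit, the equivalences asserted in (c), (d), and (h) --- e.g.\ an exceptional subcategory of $\mathcal{C}(\mathfrak{so}_{2n+1},4)_R^0$ equivalent to $\mathcal{C}(\mathfrak{sl}_2,2n+1)_\text{ad}$ --- would be established by matching quantum dimensions of generating objects computed in $\mathcal{C}(\mathfrak{g},k)$ against those of the claimed target, invoking the exceptional coincidences already recorded by Sawin and the conformal embeddings of Section \ref{sec:wittdef} to upgrade these to braided identifications where needed.

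Finally, case (i) is where I expect the main difficulty. When $H\subsetneq Z_\mathfrak{g}$ the residual simple currents $H^\perp/H$ need not be nondegenerate inside $\mathcal{C}(\mathfrak{g},k)_R^0$, so $(\mathcal{C}(\mathfrak{g},k)_R^0)_\text{pt}$ can itself be degenerate --- for instance $\mathrm{sVec}$, as in Example \ref{ex:su42} --- and its relative commutant $(\mathcal{C}(\mathfrak{g},k)_R^0)_\text{ad}$ can then contain transparent simple objects with no preimage in $\mathcal{C}(\mathfrak{g},k)$. Such degenerate subcategories are invisible to the double-centralizer dichotomy and cannot be imported from Sawin; the plan is to compute the double-braiding on $H^\perp/H$ directly from the twists $\theta(\lambda)=q^{\langle\lambda,\lambda+2\rho\rangle}$ via the balancing equation \eqref{balancing}, extract the associated quadratic form, and thereby locate all transparent objects and the degenerate subcategories they generate. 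Restricting attention to Types A and D (the only types admitting proper nontrivial $H$) and proving that no degenerate subcategories beyond those so produced can occur is the part of the argument that relies most heavily on the explicit dimension and twist data of $\mathcal{C}(\mathfrak{g},k)$, and it is the step I expect to consume the bulk of the work.
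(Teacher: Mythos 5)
Your proposal fails at its first and load-bearing step. The Galois correspondence for de-equivariantization is \emph{not} a bijection between all fusion subcategories of $\mathcal{C}(\mathfrak{g},k)_R^0$ and the interval $[\rep(H),C_{\mathcal{C}}(\rep(H))]$: equivariantization/de-equivariantization only matches fusion subcategories of $\mathcal{C}(\mathfrak{g},k)$ containing $\rep(H)$ with the \emph{$H$-stable} fusion subcategories of the de-equivariantization (the residual $H$-action on $\mathcal{C}_R^0$ is nontrivial). The exceptional subcategories in the theorem are precisely the non-stable ones, so your step 1 places only part of the lattice inside Sawin's classification and your step 3 then cannot recover the rest. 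Concretely, for every $n$ one has $\mathcal{C}(\mathfrak{so}_{2n+1},4)_R^0\simeq(\mathcal{C}(\mathfrak{sl}_2,2n+1)_{\mathrm{ad}}^{\boxtimes 2})^{\mathrm{rev}}$ and the exceptional subcategories of case (c) are the two tensor factors, which the residual $\mathbb{Z}/2\mathbb{Z}$ swaps; the smallest $H$-stable subcategory containing either factor is the whole adjoint part, so these factors have no preimage in Sawin's lattice. Moreover Sawin's list contains \emph{no} exceptional closed subsets for $k\geq3$ (and (c), (d), (h) live at levels $4$, $2$, $4$ across infinite families in $n$), so ``intersecting Sawin's exceptional list with the interval'' yields the empty set exactly where the theorem has content: your method can neither produce cases (c), (d), (h) nor certify their completeness. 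The paper's actual mechanism is different in kind: it invokes M\"uger's decomposition $\mathcal{C}(\mathfrak{g},k)_R^0\simeq\mathcal{D}\boxtimes\mathcal{E}$ for any nondegenerate subcategory $\mathcal{D}$, shows via Sawin's non-corner lemma and monoidality of the free-module functor that any free simple module on a non-corner weight $\otimes_R$-generates the image of $\mathcal{C}(\mathfrak{g},k)_{\mathrm{ad}}$ (forcing one factor to be pointed), hence an exceptional factorization forces both factors to consist of non-free modules; the simple free module $\overline{\beta}$ on the dominant root must then factor as $\tilde{\lambda}\otimes_R\tilde{\mu}$, giving the inequality $\dim(\beta)\geq\dim(\lambda)^2/|Z_{\mathfrak{g}}|^2$, which is refuted type-by-type using the Gannon--Ruelle--Walton small-dimension classifications, with the surviving low-level cases identified through rank-level conformal embeddings, complete anisotropy, and global-dimension comparisons (Lemmas 3 and 4).

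There is a secondary gap in your treatment of case (i). Computing the quadratic form on $H^\perp/H$ via the balancing equation locates the transparent \emph{invertible} objects, but the exceptional degenerate subcategories are not pointed (the two rank-4 subcategories of $(\mathcal{C}(\mathfrak{sl}_4,4)_R^0)_{\mathrm{ad}}$ in Example 5 of the paper, for instance), and your plan gives no way to bound an arbitrary degenerate subcategory $\mathcal{D}$. What is needed is the paper's two-lemma argument: first, for $k\geq3$ the category $\mathcal{C}(\mathfrak{g},k)_R^0$ has no objects of integer dimension outside its pointed part (via $|H|\dim_R(M)=\dim(M)$ and the classification of integer-dimension objects in $\mathcal{C}(\mathfrak{g},k)$); second, by Deligne's theorem the M\"uger center $\mathcal{D}'$ of a degenerate $\mathcal{D}$ is integral, hence pointed by the first lemma, and the double-centralizer identity $\mathcal{D}=C(C(\mathcal{D}))\subseteq C(\mathcal{D}')$ places $\mathcal{D}$ inside the relative commutant of a degenerate pointed subcategory. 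Without this integrality step your final reduction does not close, and without replacing your step 1 by the $\boxtimes$-factorization argument the main body of the proof does not exist.
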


\begin{proof} Most exceptions occur at level $k=2$ so our first results will assume $k\geq3$.  We begin by analyzing the pointed fusion subcategory of $\mathcal{C}(\mathfrak{g},k)_R^0$.

\begin{lemma}\label{lem:point} If $k\geq3$, then $\mathcal{C}(\mathfrak{g},k)_R^0$ contains no objects of integer dimension outside of $(\mathcal{C}(\mathfrak{g},k)_R^0)_\mathrm{pt}$ except possibly in the cases $\mathcal{C}(\mathfrak{sl}_2,4)$ and $\mathcal{C}(\mathfrak{sl}_3,3)$.
\end{lemma}
\begin{proof}
Note that objects of integer dimension in $\mathcal{C}(\mathfrak{g},k)$ have been completely classified, as described in \cite{banffschopieray}.  For $k\geq3$, the only objects of integer dimensions greater than 1 occur in the two listed exceptions.  Denote the dimension of $X\in\mathcal{C}(\mathfrak{g},k)_R$ by $\dim_R(X)$.  If $M\in\mathcal{C}(\mathfrak{g},k)_R^0$ with $\dim_R(M)\in\mathbb{Z}$, then $|H|\dim_R(M)=\dim(M)$ \cite[Theorem 1.18]{KiO}.  Thus $\dim(M)\in\mathbb{Z}$ and $M\in\mathcal{C}(\mathfrak{g},k)_\mathrm{pt}$ by \cite{banffschopieray} aside from the exceptions.  Hence as an $R$-module, $M\in((\mathcal{C}(\mathfrak{g},k)_R^0)_\mathrm{pt}$.
\end{proof}

\par The categories $\mathcal{C}(\mathfrak{sl}_2,4)$ and $\mathcal{C}(\mathfrak{sl}_3,3)$ have unique nontrivial connected \'etale algebras $R$ of Type D.  The category $\mathcal{C}(\mathfrak{sl}_2,4)_R^0$ is pointed of rank 3 which is realizable from the conformal embedding $(\mathfrak{su}_2)_4\subset(\mathfrak{sl}_3)_1$ \cite[Section 6.4(4)]{DMNO}.  This category has only 2 fusion subcategories and so it is not exceptional.  The category $\mathcal{C}(\mathfrak{sl}_3,3)_R^0$ is pointed of rank 4 with $\mathbb{Z}/2\mathbb{Z}\times\mathbb{Z}/2\mathbb{Z}$ fusion \cite[Example 3.5.1]{schopieray2017}.  Lemma \ref{lem:point} implies that invertible $R$-modules are in one-to-one correspondence with free modules on the invertible objects of $\mathcal{C}(\mathfrak{g},k)$.  These free modules are simple because $\mathcal{C}(\mathfrak{g},k)_\mathrm{pt}$ has fusion rules of the abelian group $Z_\mathfrak{g}$ (see Equation \ref{ostlemma} below).  Orbits of $\mathrm{Rep}(H)$ acting on $Z_\mathfrak{g}$ are the cosets $Z_\mathfrak{g}/H$, proving the first first major claim of our theorem as simple invertible local modules form a subgroup of $Z_\mathfrak{g}/H$.  If $H=Z_\mathfrak{g}$, then $(\mathcal{C}(\mathfrak{g},k)_R^0)_\mathrm{pt}\simeq\mathrm{Vec}$, and the following lemma proves Theorem \ref{th:simplicity}(i) since $Z_\mathfrak{g}$ is a simple group for all $\mathfrak{g}$ except Types A and D.

\begin{lemma}
If $k\geq3$, all degenerate fusion subcategories of $\mathcal{C}(\mathfrak{g},k)_R^0$ are contained in relative commutants of degenerate pointed fusion subcategories.
\end{lemma}

\begin{proof}
A degenerate fusion subcategory $\mathcal{D}\subset\mathcal{C}(\mathfrak{g},k)_R^0$ has a nontrivial symmetric fusion subcategory: its centralizer $\mathcal{D}'$.  Deligne's Theorem implies $\mathcal{D}'$ is integral and by Lemma \ref{lem:point}, $\mathcal{D}'\subset(\mathcal{C}(\mathfrak{g},k)_R^0)_\mathrm{pt}$, where the two exceptions are irrelevant as they are pointed.  Moreover $\mathcal{D}'\subset C_\mathcal{C}(\mathcal{D})$, hence $C_\mathcal{C}(\mathcal{D}')\supset C_\mathcal{C}(C_\mathcal{C}(\mathcal{D}))=\mathcal{D}$.
\end{proof}

Now the main body of the proof is for $k\in\mathbb{Z}_{\geq3}$, where the obstruction to $\mathcal{C}(\mathfrak{g},k)_R^0$ having exceptional (i.e. \!neither pointed nor the relative commutant of a pointed subcategory) nondegenerate fusion subcategories is the existence of a \emph{simple} free local $R$-module $\overline{\beta}:=\beta\otimes R$ for some $\beta\in\mathcal{C}(\mathfrak{g},k)$ of small dimension.  The simple object $\overline{\beta}$ must factor as a product of other simple $R$-modules which is rarely possible if $\dim(\beta)$ is small.  Subsections \ref{simpleA}--\ref{secE} describe this $\overline{\beta}$ for each Lie algebra $\mathfrak{g}$ and discuss the cases $k=1,2$ and any exceptional cases separately.  When $R\in\mathcal{C}_\mathrm{pt}$ is a regular algebra, the simplicity of $\overline{\lambda}$ can be quickly determined by Lemma 4 of \cite{Ostrik2003} which states
\begin{equation}\label{ostlemma}
\dim\mathrm{Hom}_{\mathcal{C}_R}(\lambda\otimes R,\lambda\otimes R)=\dim\mathrm{Hom}_\mathcal{C}(\lambda,\lambda\otimes R).
\end{equation}
Therefore $\overline{\lambda}=\lambda\otimes R$ is simple if $\lambda\otimes\delta\not\cong\lambda$ for any nontrivial summand $\delta\subset R$.  We will use this fact repeatedly below.  In the case of $\lambda\in\mathcal{C}(\mathfrak{g},k)$ this is a trivial verification as the action of the simple currents corresponds to symmetries of the affine Dynkin diagram of $\mathfrak{g}$.

\par If $\mathcal{C}(\mathfrak{g},k)_R^0$ contains a \emph{nondegenerate} fusion subcategory $\mathcal{D}$, denote the relative commutant of $\mathcal{D}$ in $\mathcal{C}(\mathfrak{g},k)_R^0$ by $\mathcal{E}$ and thus $\mathcal{C}(\mathfrak{g},k)_R^0\simeq\mathcal{D}\boxtimes\mathcal{E}$ \cite[Theorem 4.2]{mug1}.  We will assume $\mathcal{D}$ and $\mathcal{E}$ are not exceptional, that is each contains at least one noninvertible simple object.  When $k\geq3$ we claim any free local $R$-module $\overline{\lambda}:=\lambda\otimes R$, where $\lambda$ is not a \emph{corner} of $\Lambda_0$ (i.e. \!$\lambda=k\lambda_j$ for a fundamental weight $\lambda_j$) $\otimes_R$-generates the fusion subcategory $\mathcal{L}\subset\mathcal{C}(\mathfrak{g},k)_R^0$ generated by free  $R$-modules on simple objects corresponding to weights in the root lattice, $\mathcal{C}(\mathfrak{g},k)_\mathrm{ad}$.  To see this, the explanation at the end of \cite[Section 2]{Sawin06} describes how if $\lambda\in\Lambda_0$ is not a corner \cite[Lemma 3]{sawin2002}, then there exists $n\in\mathbb{Z}_{\geq1}$ such that  $\theta$ is a subobject of $(\lambda\otimes\lambda^\ast)^{\otimes n}$.  The proof of Lemma 3 of \cite{Sawin06} implies $\theta$ generates the root lattice, and moreover $\overline{\lambda}$ generates $\mathcal{L}$ as the free module functor is monoidal, i.e.
\begin{equation}
(\overline{\lambda\otimes\lambda^\ast})^{\otimes_R n}\cong\overline{(\lambda\otimes\lambda^\ast)^{\otimes n}}\supset\overline{\theta}.
\end{equation}
But as $C_{\mathcal{C}(\mathfrak{g},k)}(\mathcal{C}(\mathfrak{g},k)_\mathrm{ad})\simeq\mathcal{C}(\mathfrak{g},k)_\mathrm{pt}$, we conclude $C_{\mathcal{C}(\mathfrak{g},k)}(\mathcal{D})$ is pointed (see Example \ref{ex:deeeq}) and $\mathcal{D}$ is not exceptional.  For $k\geq3$, no exceptional fusion categories exist in Sawin's classification which means the above reasoning applies even when $\lambda$ is a non-invertible corner.  That is if $\lambda\in\mathcal{D}$ is a simple current (invertible corner) then $\mathcal{D}$ is pointed, otherwise $\mathcal{E}$ is pointed.  Moreover, exceptional nondegenerate fusion subcategories may only occur when $\mathcal{D}$ and $\mathcal{E}$ consist solely of non-free $R$-modules.  This is a restrictive constraint.  Indeed, assume a non-invertible free simple (by assumption) $R$-module $\overline{\beta}$ factors exceptionally in $\mathcal{C}(\mathfrak{g},k)_R^0$ as
\begin{equation}
\overline{\beta}\cong\tilde{\lambda}\otimes_R\tilde{\mu}\label{qa}
\end{equation}
where the notation $\tilde{\lambda}\in\mathcal{D},\tilde{\mu}\in\mathcal{E}$ is used to imply these are simple summands of $\overline{\lambda},\overline{\mu}\in\mathcal{C}(\mathfrak{g},k)^0_A$.   By the above discussion, neither $\tilde{\lambda}$ nor $\tilde{\mu}$ are invertible.  Without loss of generality assume $\dim_R(\tilde{\lambda})\leq\dim_R(\tilde{\mu})$. From \cite[Theorem 1.18]{KiO} we have
\begin{equation}\label{newcrux}
\dim(\beta)\geq\dfrac{\dim(\lambda)^2}{|Z_\mathfrak{g}|^2}
\end{equation}
for some non-invertible simple $\lambda\in\mathcal{C}(\mathfrak{g},k)$.  This inequality will be sufficient for our argument.


\begin{subsection}{Type $A_{n-1}$ for $n\geq2$}\label{simpleA}

\par We assume $n\geq4$ as the cases $n=2,3$ are complete \cite[Section 5.5]{DNO}\cite[Theorem 1]{schopieray2017}.  The categories $\mathcal{C}(\mathfrak{sl}_n,1)$ are pointed so all fusion subcategories of $\mathcal{C}(\mathfrak{sl}_n,1)_R^0$ are not exceptional.  Sawin's classification identifies no exceptional fusion subcategories of $\mathcal{C}(\mathfrak{sl}_n,2)$ and Lemma \ref{lem:point} is true for $\mathcal{C}(\mathfrak{sl}_n,2)$ unless $n=4$.  So we may extend the general proof to $k=2$ for Type A.  The category $\mathcal{C}(\mathfrak{sl}_4,2)$ has a unique nontrivial connected \'etale algebra $R$ and $\mathcal{C}(\mathfrak{sl}_4,2)_R^0$ is pointed with $\mathbb{Z}/3\mathbb{Z}$ fusion.  By computing relative commutants, this case is not exceptional.  For $k\geq2$ the dominant root is $\beta:=\lambda_1+\lambda_n$.  A generating simple current acts by $\beta\mapsto(k-2)\lambda_1+\lambda_2$, thus $\overline{\beta}$ is simple when $n\geq4$ except $n=4$ with $k=2$ which we have described.  One computes
\begin{equation}
\lambda_j\otimes\lambda_j^\ast=\bigoplus_{\ell=0}^j(\lambda_m+\lambda_{k-m}).
\end{equation}
Therefore $\dim(\lambda_j)\leq\dim(\lambda_{j+1})$ for all $0\leq j<n/2$.  These are the simple noninvertible objects of smallest dimension (along with their orbits under the simple currents and charge conjugation) in $\mathcal{C}(\mathfrak{sl}_n,k)$ \cite[Lemma 2(1)]{gannon1996}.   We claim even these weights are too large to be factors of $\overline{\beta}$.  Note that $\dim(\beta)=[n]^2-1<[n]^2$, and so if we can show $\dim(\lambda_{n/2})/[n]>n$, then $\overline{\beta}$ cannot factor via fundamental weights, and moreover by any noninvertible weights since they are of necessarily larger dimension.  Furthermore $\beta$ is in the root lattice, so rank-level duality \cite[Theorem 5.1]{ranklevel} implies we need only check for factorization of $\overline{\beta}$ in terms of dimensions when $k\geq n$.  We will make the following argument for $n$ even; the argument when $n$ is odd is similar.  The Weyl dimension formula gives
\begin{equation}\label{aineq}
\dfrac{\dim(\lambda_{n/2})}{[n]}=\dfrac{[n-1][n-2]\cdots[n/2+1]}{[n/2][n/2-1]\cdots[1]}\geq\dfrac{(n-1)(n-2)\cdots(n/2+1)}{2^{n/2-1}(n/2)(n/2-1)\cdots(2)},
\end{equation}
where the inequality is due to $[m]\geq m/2$ whenever $k\geq n$ and $[m]\leq m$ in generality \cite[Section 3.2]{schopieray2}.  The right-hand side of (\ref{aineq}) is strictly greater than $n$ for $n=18$ and the derivative is strictly greater than 1 as a function of $n$ (the bound is independent of $k\geq n$).  We then compute $\dim(\lambda_{n/2})/[n]>n$ for $17\geq n\geq9$ directly when $n=k$, and note that $\dim(\lambda_{n/2})/[n]-n$ is an increasing function of $k$ for fixed $n$.  It remains to analyze the cases $n=4,5,6,7,8$ separately where the initial inequality fails.

\par For the prime cases $n=5,7$ with $k\geq n$ we note that there exists a nontrivial Tannakian fusion subcategory precisely when $k=nm$ for some $m\in\mathbb{Z}_{\geq 1}$ in which case there is exactly one object, $\lambda:=(k/n)(\lambda_1+\cdots+\lambda_n)$ such that $\overline{\lambda}$ is not simple ($\lambda$ is fixed by the simple currents).  One can then verify that $\dim(\lambda)/n-n>0$ for $m=1$, and is an increasing function of $m$ (for fixed $n$).  Thus $\overline{\beta}$ cannot factor into simple summands of $\overline{\lambda}$ in these cases.

\par For the case $n=8$ we compute $\dim(\lambda_4)/[8]\approx5.01$ when $k=8$ and is an increasing function of $k$.  Hence $\overline{\beta}$ cannot factor when $|H|=2,4$.  Similarly $\dim(\lambda_4)/[8]\approx8.008>8$ when $k=31$ and is an increasing function of $k$.  Hence $\overline{\beta}$ could only factor at level $k=16$.  Again there is a unique weight $\lambda$ stabilized by the simple currents as in the $n=5,7$ cases and we verify $\dim(\lambda)^2/8^2>\dim(\beta)$.

\par When $n=6$, we compute $\dim(\lambda_2)/[6]\approx2.366$ when $k=6$, and $\dim(\lambda_2)/[6]\approx3.025$ when $k=16$ and both are increasing functions of $k$.  Hence $\overline{\beta}$ cannot factor when $|H|=2$ and $k$ is arbitrary, cannot factor when $|H|=3$ unless $k=6,9,12,15$, and cannot factor when $|H|=6$ unless $k=12$.  The $k=12$ case is eliminated by the dimension check in the $n=8$ case.  The $|H|=3$ cases can be eliminated by dimension checks as well.  For instance when $k=6$, there are only 3 weights $\lambda$ such that $\overline{\lambda}$ is not simple and $\dim(\lambda)^2/3^2>\dim(\beta)$ for each.

\par When $n=4$ the above arguments fail completely as $\dim(\lambda_1)/[4]<2$ for all $k\in\mathbb{Z}_{\geq1}$.  For $k\not\equiv0\pmod{8}$, $|H|=2$ when $k$ is even.  The weight $\lambda_2$ is centralized by $\mathcal{C}(\mathfrak{sl}_4,k)_\mathrm{pt}$ and hence $\overline{\lambda_2}$ must factor.  But this would imply $\dim(\lambda_2)^2/4\leq\dim(\lambda_2)$ and thus $\dim(\lambda_2)\leq4$ which is true only when $k=2,4$ (see Example \ref{ex:su42} and the explanation at the start of this section).  Finally, when $k=8m$ for some $m\in\mathbb{Z}_{\geq1}$, $|H|=4$ and there is a unique weight fixed by the simple currents: $\lambda:=(k/8)(\lambda_1+\lambda_2+\lambda_3)$.  The $R$-modules with 2 simple summands are those weights fixed by the unique simple current of order two, hence $m_1\lambda_1+m_2\lambda_2+m_3\lambda_3$ such that $m_1=m_3$ and $k-m_1-m_3=2m_2$.  Therefore the total number of simple $R$-modules which are not free is $2m+4$.   The total number of simple objects of $\mathcal{C}(\mathfrak{sl}_4,8m)$ is $\binom{8m+4}{3}$, so the rank of $\mathcal{C}(\mathfrak{sl}_4,8m)_R^0$ is greater than or equal to $(1/4)\binom{8m+4}{3}=(1/24)(8m+2)(8m+3)(8m+4)$.  But if $\mathcal{C}(\mathfrak{sl}_4,8m)_R^0$ factored in an exceptional way, each factor consists of simple $R$-modules which are not free, hence the rank is at most $(1/4)(2m+4)^2$.  This implies $\mathcal{C}(\mathfrak{sl}_4,8m)_R^0$ cannot factor exceptionally since for all $m\in\mathbb{Z}_{\geq1}$,
$(1/24)(8m+2)(8m+3)(8m+4)>(1/4)(2m+4)^2$.

\end{subsection}


\begin{subsection}{Type $B_n$ for $n\in\mathbb{Z}_{\geq2}$}\label{secB}

\par The categories $\mathcal{C}(\mathfrak{so}_{2n+1},1)$ are Ising categories \cite[Appendix B.5]{DGNO}.  The categories $\mathcal{C}(\mathfrak{so}_{2n+1},2)$ are \emph{(odd) metaplectic} \cite[Definition 3.1]{metaplectic} and have a unique nontrivial connected \'etale algebra $R$ of Type D and $\mathcal{C}(\mathfrak{so}_{2n+1},2)_R^0$ is pointed with $\mathbb{Z}/(2n+1)\mathbb{Z}$ fusion \cite[Lemma 3.4]{metaplectic}.  For $n\in\mathbb{Z}_{\geq3}$, $\mathcal{C}(\mathfrak{so}_{2n+1},k)_\text{pt}$ has $\mathbb{Z}/2\mathbb{Z}$ fusion which is Tannakian only if $k$ is even.  The nontrivial simple current acts on the short dominant root $\beta:=\lambda_1$ by $\beta\mapsto(k-1)\lambda_1$ \cite[Section 3.2]{gannon2002} and thus $\overline{\beta}$ is simple.  The weight $\beta$ is the simple object with least dimension greater than 1 \cite[Proposition A]{gannon1996} along with $(k-1)\lambda_1$.  Hence if $\overline{\beta}$ factors, inequality (\ref{newcrux}) implies
\begin{equation}\label{barg}
4\dim(\beta)\geq\dim(\lambda)^2\geq\dim(\beta)^2.
\end{equation}
In other words, $\dim(\beta)\leq4$.  As $\dim(\beta)=[2n]+[1]$ is both an increasing function of $n\in\mathbb{Z}_{\geq2}$ and $k\in\mathbb{Z}_{\geq4}$, we have only to check $\dim(\beta)>4$ in $\mathcal{C}(\mathfrak{so}_7,5)$ to imply there are no exceptional nondegenerate fusion subcategories for $n\geq2$ while $k\geq5$.  The infinite family of cases $k=4$ produce the exceptions noted in Theorem \ref{th:simplicity} (c) which are justified by the following lemma.

\begin{lemma}\label{lem:B} If $n\in\mathbb{Z}_{\geq2}$ and $R$ is the unique non-trivial connected \'etale algebra of Type D in $\mathcal{C}(\mathfrak{so}_{2n+1},4)$, then $(\mathcal{C}(\mathfrak{so}_{2n+1},4)_R^0)^\mathrm{rev}\simeq\mathcal{C}(\mathfrak{sl}_2,2n+1)_\mathrm{ad}^{\boxtimes2}$ is a braided equivalence.
\end{lemma}
\begin{proof}
This follows from the rank-level embedding $\mathfrak{so}(2n+1)_4\times\mathfrak{su}(2)_{2n+1}\subseteq\mathfrak{sp}(2(2n+1))_1$ \cite[Appendix]{DMNO} and the isomorphism of Grothendieck rings of $\mathcal{C}(\mathfrak{sl}_2,2n+1)$ and $\mathcal{C}(\mathfrak{sp}_{2(2n+1)},1)$ \cite[Theorem 5.1]{gannon2002}.  Although $\mathcal{C}(\mathfrak{sl}_2,2n+1)$ and $\mathcal{C}(\mathfrak{sp}_{2(2n+1)},1)$ have the same fusion rules, they are not equivalent as modular tensor categories.  By computing twists, one verifies that
\begin{equation}
\mathcal{C}(\mathfrak{sp}_{2(2n+1)},1)\simeq\mathcal{C}(\mathfrak{sl}_2,2n+1)_\mathrm{ad}^\mathrm{rev}\boxtimes\mathcal{C}(\mathfrak{sl}_2,2n+1)_\mathrm{pt}.
\end{equation}

In particular the rank-level embedding with \cite[Proposition 5.4]{DMNO} and the $\boxtimes$-decomposition of $\mathcal{C}(\mathfrak{sl}_2,2n+1)$ above imply the Witt group relation
\begin{align*}
&&[\mathcal{C}(\mathfrak{so}_{2n+1},4)][\mathcal{C}(\mathfrak{sl}_2,2n+1)]&=[\mathcal{C}(\mathfrak{sp}(2(2n+1)),1)] \\
\Rightarrow&&[\mathcal{C}(\mathfrak{so}_{2n+1},4)_R^0]&=[(\mathcal{C}(\mathfrak{sl}_2,2n+1)^{\boxtimes2}_\mathrm{ad})^\mathrm{rev}]
\end{align*}
Hence \cite[Proposition 5.15(iii)]{DMNO} (which we have used as our definition) there exist connected \'etale algebras $A_1,A_2$ such that there is a braided equivalence
\begin{equation}\label{eq:so4}
(\mathcal{C}(\mathfrak{so}_{2n+1},4)_R^0)^0_{A_1}\simeq((\mathcal{C}(\mathfrak{sl}_2,2n+1)^{\boxtimes2}_\mathrm{ad})^\mathrm{rev})_{A_2}^0.
\end{equation}
But $(\mathcal{C}(\mathfrak{sl}_2,2n+1)^{\boxtimes2}_\mathrm{ad})^\mathrm{rev}$ is completely anisotropic.  In detail: $\mathcal{C}(\mathfrak{sl}_2,2n+1)_\mathrm{ad}$ is simple and completely anisotropic \cite[Section 5.5]{DNO}, and by the characterization of connected \'etale algebras in Deligne tensor products \cite[Theorem 3.6]{DNO}, any nontrivial connected \'etale algebra would arise from a braided equivalence $\mathcal{C}(\mathfrak{sl}_2,2n+1)_\mathrm{ad}\simeq\mathcal{C}(\mathfrak{sl}_2,2n+1)_\mathrm{ad}^\mathrm{rev}$, which does not exist \cite[Example 6.4(2)]{DMNO}.  Lastly, as $A_2=\mathbbm{1}$, then $A_1=\mathbbm{1}$ as well because both $\dim(\mathcal{C}(\mathfrak{so}_{2n+1},4)_R^0)$ and $\dim(\mathcal{C}(\mathfrak{sl}_2,2n+1)_\mathrm{ad})^2$  are equal to $(N^2/4)\csc^4(\pi/N)$ where $N=2n+3$.  This is computable directly from the quantum Weyl denominator formula \cite[Section 2.2.1]{COQUEREAUX2014258} (and sources within), and can also be realized via the rank-level duality \cite{NACULICH1990417} between $\mathfrak{so}(2n+1)_4$ and $\mathfrak{so}(4)_{2n+1}$ with the identification $\mathfrak{so}(4)\cong\mathfrak{su}(2)\times\mathfrak{su}(2)$.
\end{proof}
\begin{note} The fusion rules of these exceptional cases were incidentally studied in the language of ``$\mathbb{Z}/2\mathbb{Z}$ permutation gauging'' of $\mathcal{C}(\mathfrak{sl}_2,2n+1)_\mathrm{ad}$ based on low-level examples in  \cite{2018arXiv180401657E}.  Lemma \ref{lem:B} is a proof of their Conjecture 6.2.
\end{note}
\end{subsection}


\begin{subsection}{Type $C_n$ for $n\in\mathbb{Z}_{\geq3}$}\label{c}

\par The case of $\mathcal{C}(\mathfrak{sp}_{2n},1)$ is explained by the isomorphism of Grothendieck rings of $\mathcal{C}(\mathfrak{sp}_{2n},1)$ and $\mathcal{C}(\mathfrak{sl}_2,n)$ \cite[Theorem 5.1]{gannon2002}.  Moreover by computing the twist of the simple current, when $4$ divides $n$ there is a unique nontrivial connected \'etale algebra $R$ of Type D and $\mathcal{C}(\mathfrak{sp}_{2n},1)_R^0$ is simple \cite[Section 5.5]{DNO}.  The isomorphism of Grothendieck rings of $\mathcal{C}(\mathfrak{sp}_{2n},k)$ and $\mathcal{C}(\mathfrak{sp}_{2k},n)$ implies the $k=2$ cases were discussed in Section \ref{secB} as $\mathfrak{sp}_4\cong\mathfrak{so}_5$.  In particular $\mathcal{C}(\mathfrak{sp}_8,2)_R^0$ has exceptional fusion subcategories.

\par For all $n\in\mathbb{Z}_{\geq3}$, $\mathcal{C}(\mathfrak{sp}_{2n},k)_\text{pt}$ has $\mathbb{Z}/2\mathbb{Z}$ fusion.  The nontrivial simple current acts on the short dominant root $\beta:=\lambda_2$ by $\beta\mapsto\lambda_{n-2}+(k-1)\lambda_n$.  Hence $\overline{\beta}$ is simple for all $n\geq3$ if $k\geq3$.  The simple object of smallest dimension greater than 1 is the natural representation $\lambda_1$ and its orbit under the simple current \cite[Proposition, Section 4]{gannon1996}.  The argument provided by Gannon, Ruelle, \& Walton first provides a list of \emph{candidate} weights \cite[Lemma 2(3)]{gannon1996}, which for Type C are all fundamental weights, or specific multiples of fundamental weights defined in \cite[Equations 4.3(a),(b)]{gannon1996}, and then proceeds to determine which has smallest dimension generically.   As with $\beta=\lambda_2$, $\overline{\lambda_j}$ is simple for all fundamental weights $\lambda_j$ \cite[Section 3.3]{gannon2002}.  The only possible decomposable free module on a multiple of a fundamental weight would be $k\lambda_{n/2}$ when $n$ is even.  Thus if $\overline{\beta}$ factors, then $\lambda$ as in inequality (\ref{newcrux}) must have dimension larger than any candidate weight whose free module is not simple.  In particular $\dim(\beta)$ when $n$ is odd (we may choose any candidate in this case), or $\min\{\dim(\beta),\dim(k\lambda_{n/2})\}$ when $n$ is even.  Hence $\dim(\beta)\leq4$ when $n$ is odd, or $\min\{\dim(\beta),\dim(k\lambda_{n/2})\}\leq4$ when $n$ is even by the argument in (\ref{barg}).  This is false in $\mathcal{C}(\mathfrak{sp}_6,3)$ and $\mathcal{C}(\mathfrak{sp}_8,3)$, respectively and both $\dim(\beta)$ and $\dim(k\lambda_{n/2})$ are increasing functions of $n$ and $k$.
\end{subsection}


\begin{subsection}{Type $D_n$ for $n\in\mathbb{Z}_{\geq4}$}\label{D}

\par The categories $\mathcal{C}(\mathfrak{so}_{2n},1)$ are pointed.  The categories $\mathcal{C}(\mathfrak{so}_{2n},2)$ are known as the \emph{even metaplectic} categories and must be considered on a case-by-case basis, though most results exist in the literature.  First we consider $H=\mathbb{Z}/2\mathbb{Z}$  generated by $2\lambda_1$.  This is the unique order 2 subgroup when $n$ is odd and the diagonal $\mathbb{Z}/2\mathbb{Z}\subset\mathbb{Z}/2\mathbb{Z}\times\mathbb{Z}/2\mathbb{Z}$ when $n$ is even.  The categories $\mathcal{C}(\mathfrak{so}_{2n},2)_R^0$ are pointed with $\mathbb{Z}/2n\mathbb{Z}$ fusion \cite[Theorem 4.5]{2017arXiv171010284B}\cite[Theorem 3.8]{2016arXiv160904896B}.  When $n\equiv0\pmod{4}$, $\mathcal{C}(\mathfrak{so}_{2n},2)_\text{pt}\simeq\text{Rep}(\mathbb{Z}/2\mathbb{Z}\oplus\mathbb{Z}/2\mathbb{Z})$, hence there are additional pointed fusion subcategories.  When $H=Z_{\mathfrak{so}_{2n}}$, we can consider $\mathcal{C}(\mathfrak{so}_{2n},2)_R^0$ as the category of local modules over the regular algebra of the unique pointed fusion subcategory $\mathbb{Z}/2\mathbb{Z}\subset\mathbb{Z}/2n\mathbb{Z}$ described in the $|H|=2$ case.  The resulting cyclic group has dimension $2n/2^2=n/2$ and thus $\mathcal{C}(\mathfrak{so}_{2n},2)_R^0$ is pointed wtih $\mathbb{Z}/(n/2)\mathbb{Z}$ fusion.  Last of the $k=2$ cases are $H$ being the off-diagonal $\mathbb{Z}/2\mathbb{Z}\subset\mathbb{Z}/2\mathbb{Z}\times\mathbb{Z}/2\mathbb{Z}$ when $4$ divides $n$. Gustafson, Rowell, \& Ruan \cite[Theorem 2]{2018arXiv180800698G} have shown $\mathcal{C}(\mathfrak{so}_{2n},2)_R^0\simeq\mathcal{C}(\mathfrak{so}_{n/2},2)$ as fusion categories, whose fusion subcategories are under the umbrella of Sawin's classification \cite[Theorem 1]{Sawin06}.

\par Now for all $n\in\mathbb{Z}_{\geq4}$ the orbit of the dominant root $\beta:=\lambda_2$ under the action of the simple currents is $\lambda_2$, $(k-2)\lambda_1+\lambda_2$, $\lambda_{n-2}+(k-2)\lambda_n$, and $\lambda_{n-2}+(k-2)\lambda_{n-1}$. Thus $\overline{\beta}$ is simple when $k\geq3$.  We now use the reasoning of Section \ref{c}.  For Type D at generic levels the simple noninvertible object of least dimension is the natural representation $\lambda_1$ and its orbit under the simple currents and conjugations of Type D \cite[Section 3.4]{gannon2002}.  Once again all other candidate weights \cite[Lemma 2(4)]{gannon1996} are fundamental weights or multiples thereof (except there is no candidate which is a nontrivial multiple of $\lambda_n$).  The free modules on all fundamental weights are simple for $n\geq4$ and $k\geq3$  and there are only two possible nontrivial multiples of fundamental weights $\lambda$ for which $\overline{\lambda}$ is potentially decomposable: $(k/2)\lambda_1$ when $k$ is even, and $(k/2)\lambda_{n-1}$ when both $n$ and $k$ are even.

\par Therefore we have our desired constraints.  First if $n$ is odd and $k\geq3$ we must have $\min\{\dim(\beta),\dim((k/2)\lambda_1)\}\leq4,16$ depending on if $|H|=2,4$, respectively.  We compute $|H|=2$ only if $k$ is even and $|H|=4$ only if $8\mid k$.  From this we conclude there are no exceptional cases for $n$ odd because $\min\{\dim(\beta),\dim((k/2)\lambda_1)\}>5,18$ in $\mathcal{C}(\mathfrak{so}_{10},4)$ and $\mathcal{C}(\mathfrak{so}_{10},8)$, respectively, and the relevant dimensions are increasing functions of $n$ and $k$.  Secondly, when $n$ is even and $k\geq3$, define
\begin{equation}M:=\min\{\dim(\beta),\dim((k/2)\lambda_1),\dim((k/2)\lambda_{n-1})\}.
\end{equation}
Again we must have $M\leq4,16$ when $|H|=2,4$, respectively, and as before $|H|=2$ only if $k$ is even, but $|H|=4$ when $4\mid k$ by computing the twists of the simple currents.  There are no exceptional cases when $|H|=2$ because $M>5$ in $\mathcal{C}(\mathfrak{so}_{8},4)$ and the relevant dimensions are increasing functions of $n$ and $k$.  There are potential exceptional cases when $|H|=4$ as $M\leq16$ when $k=4$ and $n$ even is arbitrary, and $n=4$ with $k=8$.  We see this by noting $M>16$ in $\mathcal{C}(\mathfrak{so}_8,10)$ and $\mathcal{C}(\mathfrak{so}_{12},8)$, and the relevant dimensions are increasing functions of $n$ and $k$.  For $n=4$ with $k=8$, we compute using the quantum Weyl dimension formula that (in order), the smallest dimensions of simple objects in $\mathcal{C}(\mathfrak{so}_{8},8)$ are approximately $1.000$, $5.494$ and $14.592$.  The final dimension is $\dim(\beta)$ so it is an easy check that $\dim(\beta)$ cannot factor as in (\ref{qa}) using these dimensions.  We will now describe the exceptional cases when $k=4$ as in Section \ref{secB}.

\begin{lemma}\label{lemd2}
If $n\in\mathbb{Z}_{\geq4}$ is even and $R$ is the maximal (dimension 4) connected \'etale algebra of Type D in $\mathcal{C}(\mathfrak{so}_{2n},4)$, then
\begin{equation}
(\mathcal{C}(\mathfrak{so}_{2n},4)_R^0)^\mathrm{rev}\simeq(\mathcal{C}(\mathfrak{sl}_2,2n)_S^0)^{\boxtimes2}
\end{equation}
where $S$ is the unique nontrivial connected \'etale algebra of Type D in $\mathcal{C}(\mathfrak{sl}_2,2n)$.
\end{lemma}

\begin{proof}
This proof will follow the same logic as that of Lemma \ref{lem:B}.  Note that $\mathcal{C}(\mathfrak{sp}_{4n},1)\simeq\mathcal{C}(\mathfrak{sl}_2,2n)^\mathrm{rev}$ is a braided equivalence in this case, hence we have by the rank-level embedding
\begin{equation}
[\mathcal{C}(\mathfrak{so}_{2n},4)_R^0]=[\mathcal{C}(\mathfrak{sl}_2,2n)_S^0]^{-2}
\end{equation}
and our result follows as $(\mathcal{C}(\mathfrak{sl}_2,2n)_S^0)^{\boxtimes2}$ is completely anisotropic unless $n=8,14$ and has the same global dimension as $\mathcal{C}(\mathfrak{so}_{2n},4)_R^0$.  Our result is true for $n=8,14$ where the de-equivariantizations are rank 36 and 100, respectively as well.  There exists a fusion subcategory generated by either simple summand of $\overline{2\lambda_1}$, of ranks 6 and 10, respectively.  As there are no simple objects of integer dimension other than $\mathbbm{1}$, the fusion subcategories are modular and the fusion rules coincide with those of $(\mathcal{C}(\mathfrak{sl}_2,16)_S^0$ and $(\mathcal{C}(\mathfrak{sl}_2,28)_S^0$, respectively, where $S$ is the unique nontrivial connected \'etale algebra of Type D.
\end{proof}

\end{subsection}


\begin{subsection}{Types $E_6$ and $E_7$}\label{secE}

\par Simple objects of $\mathcal{C}(E_6,k)$ and $\mathcal{C}(E_7,k)$ will be indexed as in \cite[Section 3.5--3.6]{gannon2002}.  We compute $\mathcal{C}(E_6,k)_\mathrm{pt}$ is Tannakian if and only if $3\mid k$ so let $k\geq3$.  A simple current acts by cyclically permuting $\lambda_0\rightarrow\lambda_1\rightarrow\lambda_5$, and $\lambda_2\rightarrow\lambda_4\rightarrow\lambda_6$ so the dominant root $\lambda_6$ is permuted and moreover $\overline{\lambda_6}$ is simple.  The simple object of smallest dimension is is $\lambda_1$ \cite[Proposition (D)]{gannon1996} and its orbit under the simple currents and conjugation with $\dim(\lambda_1)=[17]+[9]+[1]$ and $\dim(\lambda_6)=[8][9][13][3]^{-1}[4]^{-1}$.
If $\dim(\lambda_1)^2/3^2>\dim(\lambda_6)$, then the same is true replacing $\lambda_1$ with any weight, hence $\overline{\lambda_6}$ cannot factor.  As $\dim(\lambda_1)$ is an increasing function of $k$, it suffices to find $k$ such that $\dim(\lambda_1)^2>3^2\cdot78$ since $\dim(\lambda_6)\leq78$.  This is true for $k\geq123$ and one verifies $\dim(\lambda_1)^2/3^2>\dim(\lambda_6)$ for $60\leq k<123$ as well.  It is then an elementary computation to verify $\overline{\lambda_6}$ cannot factor by dimension checks for the twenty cases $3\leq k\leq 60$ such that $3\mid k$.
%
We compute $\mathcal{C}(E_7,k)_\mathrm{pt}$ is Tannakian if and only if $4\mid k$ so let $k\geq4$.  The simple current acts by transposing $\lambda_0\leftrightarrow\lambda_6$, $\lambda_1\leftrightarrow\lambda_5$, and $\lambda_2\leftrightarrow\lambda_4$ so the dominant root $\lambda_1$ is permuted and moreover $\overline{\lambda_1}$ is simple.  Note the simple object of smallest dimension is $\lambda_6$ \cite[Proposition (E)]{gannon1996} with $\dim(\lambda_6)=[28]+[18]+[10]$ and $\dim(\lambda_1)=[12][14][19][4]^{-1}[6]^{-1}$.  As for $E_6$, $\overline{\beta}$ cannot factor $\dim(\lambda_6)^2>2^2\cdot133$ and $\dim(\lambda_1)\leq133$ when $k\geq15$.  And one verifies $\dim(\lambda_6)^2/2^2>\dim(\lambda_1)$ manually for $k=4,8,12$.

\end{subsection}
%

\end{proof}

%

\begin{corollary}\label{cor:one}
If $R$ is the regular algebra of $\mathrm{Rep}(Z_\mathfrak{g})\simeq\mathcal{C}(\mathfrak{g},k)_\text{pt}$, then $\mathcal{C}(\mathfrak{g},k)_R^0$ is simple aside from the exceptions in Theorem \ref{th:simplicity} (a)--(h).
\end{corollary}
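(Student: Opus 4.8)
The plan is to read the corollary off directly from the complete classification in Theorem \ref{th:simplicity}, specialized to $H = Z_\mathfrak{g}$. The decisive observation is that in this case the relevant coset group $Z_\mathfrak{g}/H$ is trivial, which collapses the first two families of fusion subcategories and renders several of the exceptional families vacuous.

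First I would record that when $H = Z_\mathfrak{g}$ the pointed part trivializes. As established within the proof of Theorem \ref{th:simplicity}, the simple invertible local $R$-modules form a subgroup of $Z_\mathfrak{g}/H = \{e\}$, so $(\mathcal{C}(\mathfrak{g},k)_R^0)_\mathrm{pt} \simeq \mathrm{Vec}$. Consequently the only pointed fusion subcategory is $\mathrm{Vec}$ itself; and since $\mathrm{Vec}$ is $\otimes$-generated by $\mathbbm{1}$, which centralizes every object, the only relative commutant of a pointed fusion subcategory is $C_{\mathcal{C}(\mathfrak{g},k)_R^0}(\mathrm{Vec}) = \mathcal{C}(\mathfrak{g},k)_R^0$. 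Thus the first two families in Theorem \ref{th:simplicity} contribute exactly the two trivial subcategories $\mathrm{Vec}$ and the whole category.

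Next I would dispose of the exceptional families that cannot occur under the present hypothesis. Exception (i) is explicitly stated only for $H \subsetneq Z_\mathfrak{g}$ and is therefore vacuous here; the same is true of exceptions (e) and (g), which are stated respectively for the diagonal and an off-diagonal $\mathbb{Z}/2\mathbb{Z}$ inside $Z_{\mathfrak{so}_{2n}} = \mathbb{Z}/2\mathbb{Z}\times\mathbb{Z}/2\mathbb{Z}$, both proper subgroups. This leaves only the exceptional families (a)--(d), (f), and (h), all of which are genuine instances of $H = Z_\mathfrak{g}$.

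Assembling these observations, when $H = Z_\mathfrak{g}$ the complete list of fusion subcategories furnished by Theorem \ref{th:simplicity} consists of $\mathrm{Vec}$, the whole category $\mathcal{C}(\mathfrak{g},k)_R^0$, and whatever additional subcategories arise in the exceptional cases enumerated in (a)--(h). Hence, outside of those cases, $\mathcal{C}(\mathfrak{g},k)_R^0$ admits no nontrivial proper fusion subcategory and is therefore simple. I expect no real obstacle beyond bookkeeping: the entire content of the corollary is already carried by Theorem \ref{th:simplicity}, and the proof amounts to matching each clause of the theorem to the specialization $H = Z_\mathfrak{g}$.
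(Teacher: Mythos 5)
Your proof is correct and follows essentially the same route as the paper's own (very brief) argument: specialize Theorem \ref{th:simplicity} to $H=Z_\mathfrak{g}$, observe $(\mathcal{C}(\mathfrak{g},k)_R^0)_\mathrm{pt}\simeq\mathrm{Vec}$ with relative commutant the whole category, and note that exception (i) requires $H\subsetneq Z_\mathfrak{g}$ and so cannot occur. Your additional bookkeeping (noting that (e) and (g) are likewise vacuous here) is a harmless refinement of the same argument.
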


\begin{proof}
Theorem \ref{th:simplicity} implies $(\mathcal{C}(\mathfrak{g},k)_R^0)_\mathrm{pt}\simeq\mathrm{Vec}$ in non-exceptional cases with relative commutant $\mathcal{C}(\mathfrak{g},k)_R^0$ and there are no other fusion subcategories. 
\end{proof}

\end{section}


\begin{section}{A classification of rank 2 Witt group relations}\label{sec:witt}


\begin{subsection}{$\mathcal{C}(\mathfrak{so}_5,k)$}\label{beetoo}

\par The numerical data for $\mathcal{C}(\mathfrak{so}_5,k)$ can be found in Section 2.3.3 of \cite{schopieray2}.  We will index the simple objects of $\mathcal{C}(\mathfrak{so}_5,k)$ in this section by ordered pairs $(s,t)$, with $s,t\in\mathbb{Z}_{\geq0}$ such that $s+t\leq k$.  The rank of $\mathcal{C}(\mathfrak{so}_5,k)$ is therefore the triangular number $(1/2)(k+1)(k+2)$.  For even $k$, $\mathcal{C}(\mathfrak{so}_5,k)_\text{pt}\simeq\text{Rep}(\mathbb{Z}/2\mathbb{Z})$ with unique nontrivial connected \'etale algebra $R$, otherwise $\mathcal{C}(\mathfrak{so}_5,k)_\text{pt}\simeq\mathrm{sVec}$.  Simple objects of $\mathcal{C}(\mathfrak{so}_5,k)_R$ are summands of free $R$-modules $(s,t)\otimes R=(s,t)\oplus(k-s-t,t)$ \cite[Lemma 3.4]{KiO}.   We compute \cite[Lemma 4]{Ostrik2003},
\begin{equation}
\dim\mathrm{Hom}_{\mathcal{C}_R}((s,t)\otimes R,(s,t)\otimes R)=\dim\mathrm{Hom}((s,t),(s,t)\otimes R)=1+\delta_{s,k-s-t}.\end{equation}
Thus the free module $(s,t)\otimes R$ is simple if and only if $s\neq k-s-t$.  Lemma 3.15 of \cite{DMNO} then implies the free $R$-modules are local if and only if $s$ is even (i.e. \!$(s,t)\in\mathcal{C}(\mathfrak{so}_5,k)_\mathrm{ad}$).  The rank of $\mathcal{C}(\mathfrak{so}_5,2n)_R^0$ for $n\in\mathbb{Z}_{\geq1}$ is thus $(n+1)(n+4)/2$.  We will study lower levels in detail where exceptional behavior occurs, then describe their general behavior at all other levels.

\paragraph{Level 1} This is an example of an Ising category \cite[Appendix B]{DGNO}.  As explained in \cite[Section 6.4(3)]{DMNO}, there exists a unique odd number $\ell$, $1\leq\ell\leq15$ such that $[\mathcal{C}(\mathfrak{so}_5,1)]=[\mathcal{C}(\mathfrak{sl}_2,2)]^\ell$.  Computing $\xi(\mathcal{C}(\mathfrak{so}_5,1))=\exp(5\pi i/8)$ determines
\begin{align}
[\mathcal{C}(\mathfrak{so}_5,1)]&=[\mathcal{C}(\mathfrak{sl}_2,2)]^7,\text{ and} \\
[\mathcal{C}(\mathfrak{so}_5,1)]^{16}&=[\text{Vec}].
\end{align}

\paragraph{Level 2} The category $\mathcal{C}(\mathfrak{so}_5,2)_R^0$ is pointed with $\mathbb{Z}/5\mathbb{Z}$ fusion.  All relations in $\mathcal{W}_\mathrm{pt}$ are known \cite[Section 5.3]{DMNO} and by computing central charge $\xi(\mathcal{C}(\mathfrak{so}_5,2))=-1$ we have
\begin{equation}
[\mathcal{C}(\mathfrak{so}_5,2)]^2=[\text{Vec}].
\end{equation}
Note the conformal embedding $B_{2,2}\subset A_{4,1}$ \cite[Section 3.3]{coq2} achieves this result as well.

\paragraph{Level 3} The conformal embedding $B_{2,3}\subset D_{5,1}$ gives $[\mathcal{C}(\mathfrak{so}_5,3)]=[\mathcal{C}(\mathbb{Z}/4\mathbb{Z},q)]$ where $q(1)=q(3)=\exp(5\pi i/4)$ and $q(2)=-1$.  We compute $\xi(\mathcal{C}(\mathbb{Z}/4\mathbb{Z},q))=-(1+i)/\sqrt{2}$ and thus
\begin{equation}
[\mathcal{C}(\mathfrak{so}_5,3)]^8=[\text{Vec}].
\end{equation}

\paragraph{Level 4}  Theorem \ref{th:simplicity}(c) implies
\begin{equation}
[\mathcal{C}(\mathfrak{so}_5,4)]=[\mathcal{C}(\mathfrak{so}_5,4)_R^0]=[(\mathcal{C}(\mathfrak{sl}_2,5)_\mathrm{ad}^\text{rev})^{\boxtimes2}]=[\mathcal{C}(\mathfrak{sl}_2,5)_\mathrm{ad}]^{-2}. \label{witt:so54}
\end{equation}
In particular, $[\mathcal{C}(\mathfrak{so}_5,4)]$ has infinite order in $\mathcal{W}$.  Furthermore, $\mathcal{C}(\mathfrak{sl}_2,5)_\text{pt}\simeq\mathcal{C}(\mathfrak{sl}_2,1)$ hence
\begin{equation}
[\mathcal{C}(\mathfrak{so}_5,4)][\mathcal{C}(\mathfrak{sl}_2,5)]^2=[\mathcal{C}(\mathfrak{sl}_2,1)]^2.
\end{equation}

\paragraph{Level 7} The conformal embedding $B_{2,7}\subset D_{7,1}$ gives
\begin{equation}
[\mathcal{C}(\mathfrak{so}_5,7)]=[\mathcal{C}(\mathbb{Z}/4\mathbb{Z},q)].
\end{equation}
A central charge computation verifies $\xi(\mathcal{C}(\mathfrak{so}_5,7))=\exp(7\pi i/4)$ hence
\begin{equation}
[\mathcal{C}(\mathfrak{so}_5,7)]^3=[\mathcal{C}(\mathfrak{so}_5,3)].
\end{equation}

\paragraph{Level 12} The conformal embedding $B_{2,12}\subset E_{8,1}$ gives
\begin{equation}
[\mathcal{C}(\mathfrak{so}_5,12)]=[\text{Vec}].
\end{equation}

\paragraph{Generic levels} Note that $\mathcal{C}(\mathfrak{so}_5,k)_\text{ad}$ is the fusion subcategory with simple objects indexed by weights in the root lattice by Sawin's classification and moreover $\mathcal{C}(\mathfrak{so}_5,k)_\text{ad}$ are $s$-simple when $k$ is odd and completely anisotropic by the announced work of Gannon \cite{banffgannon} classifying all modular invariants for $B_2$.  The non-existence of nontrivial modular invariants implies the non-existence of connected \'etale algebras as explained in \cite[Section 4.1]{schopieray2017}. These categories are slightly degenerate by the double-centralizer theorem \cite[Theorem 3.2(i)]{mug1}.  As explained in Section \ref{sec:wittdef}, the only relations which could occur between $[\mathcal{C}(\mathfrak{so}_5,k)_\mathrm{ad}]$ are if one of these representative categories is equivalent to its reverse.  Any braided equivalence $F:\mathcal{C}(\mathfrak{so}_5,k)_\mathrm{ad}\stackrel{\sim}{\to}\mathcal{C}(\mathfrak{so}_5,k)_\mathrm{ad}^\text{rev}$ induces an automorphism of the Grothendieck ring, which were classified in \cite[Theorem 3.C]{gannon2002}.  For $k\geq13$, all automorphisms of the Grothendieck ring are trivial, therefore $F$ must be a gauge autoequivalence, i.e. \!one which acts trivially on objects.  We compute $\theta(0,2)=\exp(3/(k+3)\cdot2\pi i)$.  The full twist in the reverse category is inverse to the original, so for $F$ to exist we must have $\theta(0,2)=\pm1$ which only happens when $k=3$.  For even $k$, $\mathcal{C}(\mathfrak{so}_5,k)_R^0$ is simple except $k=2,4$ by Theorem \ref{th:simplicity}.

\begin{theorem}
All relations in $\mathcal{W}$ in the subgroup generated by $[\mathcal{C}(\mathfrak{so}_5,k)]$ are generated by
\begin{align}
[\mathcal{C}(\mathfrak{so}_5,1)]^{16}&=[\textnormal{Vec}], \\
[\mathcal{C}(\mathfrak{so}_5,2)]^2&=[\textnormal{Vec}], \\
[\mathcal{C}(\mathfrak{so}_5,3)]^8&=[\textnormal{Vec}], \\
[\mathcal{C}(\mathfrak{so}_5,7)]^3&=[\mathcal{C}(\mathfrak{so}_5,3)],\text{ and} \\
[\mathcal{C}(\mathfrak{so}_5,12)]&=[\textnormal{Vec}].
\end{align}
\end{theorem}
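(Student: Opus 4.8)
The plan is to read off the five displayed relations from the level-by-level computations already completed above, and then prove \emph{completeness} — that no further relations hold in the subgroup $\langle x_k : k\geq 1\rangle\subset\mathcal{W}$, where I abbreviate $x_k:=[\mathcal{C}(\mathfrak{so}_5,k)]$. The structural framework is that of Section~\ref{sec:wittdef}: the homomorphism $\Sigma\colon\mathcal{W}\to s\mathcal{W}$ whose kernel is generated by the Ising classes, the decomposition $s\mathcal{W}\simeq s\mathcal{W}_\text{pt}\oplus s\mathcal{W}_2\oplus s\mathcal{W}_\infty$, and the fact that the only relations in $s\mathcal{W}$ are self-dualities $[\mathcal{C}]=[\mathcal{C}]^{-1}$ of simple, completely anisotropic, non-pointed representatives. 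The relations themselves come from the conformal embeddings $B_{2,2}\subset A_{4,1}$, $B_{2,3}\subset D_{5,1}$, $B_{2,7}\subset D_{7,1}$, $B_{2,12}\subset E_{8,1}$ and the central-charge computations above, so all that remains is to verify these generate the full relation module.

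First I would split the generators into a torsion part (levels $1,2,3,7,12$) and a free part (all other levels), and pin down the torsion exactly. The levels $1,2,3,7,12$ each give a class that is pointed or Ising: $x_2$ is pointed with $\mathbb{Z}/5\mathbb{Z}$ fusion, $x_3$ and $x_7$ are pointed of the form $[\mathcal{C}(\mathbb{Z}/4\mathbb{Z},q)]$, $x_{12}=[\text{Vec}]$, and $x_1$ is Ising. Since $x_1$ lies in $\ker\Sigma$ and the Ising classes form a cyclic group of order $16$, the computed central charge $\exp(5\pi i/8)$, a primitive $16$th root of unity, forces $\text{ord}(x_1)=16$; likewise $\xi=-1$ and $\xi=\exp(5\pi i/4)$ give $\text{ord}(x_2)=2$ and $\text{ord}(x_3)=8$. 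The order-$24$ behaviour of $x_7$ is \emph{not} visible to central charge alone (its charge $\exp(7\pi i/4)$ has order $8$), so here I would work inside $\mathcal{W}_\text{pt}$, whose relations are completely known through the classical Witt group of discriminant forms, and use the explicit quadratic form $q$ to check that $x_7^3=x_3$ is the only relation tying $x_7$ to the rest and that $x_7^8\neq[\text{Vec}]$; this yields $\langle x_3,x_7\rangle\cong\mathbb{Z}/24\mathbb{Z}$, and since $\Sigma$ separates $s\mathcal{W}_\text{pt}$ from $s\mathcal{W}_\infty$ the torsion subgroup is exactly $\langle x_1\rangle\oplus\langle x_2\rangle\oplus\langle x_7\rangle$.

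The heart of the argument is the free part: every remaining level, namely $k=4$ and the generic levels $k\geq5$ other than $7$ and $12$, must contribute an independent infinite-order generator. For odd $k\notin\{1,3,7\}$ this is precisely the ``Generic levels'' discussion: $\mathcal{C}(\mathfrak{so}_5,k)_\text{ad}$ is $s$-simple, completely anisotropic, non-pointed, and — via the Grothendieck-ring automorphism classification together with the twist obstruction $\theta(0,2)=\exp(6\pi i/(k+3))\neq\pm1$ for $k\neq3$ — not braided equivalent to its reverse, so its $\Sigma$-image is a free generator of $s\mathcal{W}_\infty$ subject to no relation. For even $k\geq6$ with $k\neq12$, simplicity from Corollary~\ref{cor:one} together with the absence of noninvertible integer-dimensional objects gives complete anisotropy, so $\Sigma(x_k)$ is again a free generator; and $x_4=[\mathcal{C}(\mathfrak{sl}_2,5)_\text{ad}]^{-2}$ contributes $-2$ times the generator attached to $\mathcal{C}(\mathfrak{sl}_2,5)_\text{ad}$. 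Because $s\mathcal{W}_\infty$ is free abelian, independence of these generators reduces to showing the underlying anisotropic categories are pairwise inequivalent and distinct from their reverses.

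I expect the main obstacle to be exactly this last independence step in $s\mathcal{W}_\infty$: ruling out accidental braided equivalences $\mathcal{C}(\mathfrak{so}_5,k_1)_\text{ad}\simeq\mathcal{C}(\mathfrak{so}_5,k_2)_\text{ad}^{(\mathrm{rev})}$ across distinct levels, and against the $\mathfrak{sl}_2$-type generator appearing at level $4$, since a single coincidence would collapse two free generators and manufacture an unlisted relation. The twist and Grothendieck-ring invariants already dispose of self-duality, but separating distinct generators from one another is where the growth estimates on global dimensions (with direct verification for the finitely many small levels) must be made precise. Once no such coincidence survives, the relation module of $\langle x_k\rangle$ is the span of the five displayed relations, and the theorem follows.
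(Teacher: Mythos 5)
Your overall skeleton does match the paper's: read the five relations off the conformal embeddings $B_{2,2}\subset A_{4,1}$, $B_{2,3}\subset D_{5,1}$, $B_{2,7}\subset D_{7,1}$, $B_{2,12}\subset E_{8,1}$ and central charges, then prove completeness through $\Sigma$ and the fact that the only possible relations in $s\mathcal{W}$ among simple (resp.\ $s$-simple), completely anisotropic, non-pointed representatives are of order $2$. But two of your steps are genuinely broken. First, for even levels $k\geq 6$, $k\neq 12$, you derive complete anisotropy of $\mathcal{C}(\mathfrak{so}_5,k)_R^0$ from simplicity (Corollary \ref{cor:one}) plus ``absence of noninvertible integer-dimensional objects.'' That implication is false: connected \'etale algebras need not have integer-dimensional summands — the exceptional algebra $\mathbbm{1}\oplus(6)$ in $\mathcal{C}(\mathfrak{sl}_2,10)$ has dimension $3+\sqrt{3}$, and $\mathcal{C}(\mathfrak{g}_2,3)$ is simple yet carries a nontrivial \'etale algebra via $G_{2,3}\subset E_{6,1}$. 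The paper's actual input here is Gannon's classification of modular invariants for $B_2$ (non-existence of nontrivial modular invariants implies non-existence of nontrivial connected \'etale algebras), invoked in the ``Generic levels'' paragraph for both parities; the trivial-twist criterion used in Example \ref{ex:g} would be the other standard fix, and you invoke neither. Relatedly, your plan to dispose of self-duality of the even-level representatives by ``twist and Grothendieck-ring invariants'' cannot run: the fusion rules of $\mathcal{C}(\mathfrak{so}_5,2m)_R^0$ are not explicitly known and no automorphism classification exists for them — that opacity is the premise of the whole paper. The paper instead tests the only admissible relation $[\mathcal{C}(\mathfrak{so}_5,2m)_R^0]^2=[\mathrm{Vec}]$ by central charge, computing $\xi(\mathcal{C}(\mathfrak{so}_5,2m)_R^0)=\exp(5m\pi i/(2m+3))$, which is $\pm1$ only for $m=1,6$; you omit this computation, and it is the operative step. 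Finally, the ``main obstacle'' you anticipate — ruling out cross-level coincidences by global-dimension growth estimates — dissolves in the paper: the representatives are pairwise inequivalent simply because the even-level list is modular while the odd-level list is supermodular, and within each list all ranks are distinct.

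Second, your torsion analysis asserts something impossible. The class $x_7=[\mathcal{C}(\mathbb{Z}/4\mathbb{Z},q)]$ lies in the subgroup of $\mathcal{W}_\mathrm{pt}$ generated by Witt classes of pointed categories on finite abelian $2$-groups, which is a finite $2$-primary group under the identification of $\mathcal{W}_\mathrm{pt}$ with the classical Witt group of metric groups; hence $\mathrm{ord}(x_7)$ is a power of $2$, and $\langle x_3,x_7\rangle\cong\mathbb{Z}/24\mathbb{Z}$ cannot hold. Likewise the direct-sum claim $\langle x_1\rangle\oplus\langle x_2\rangle\oplus\langle x_7\rangle$ is unjustified: the even powers $x_1^{2j}$ of the Ising class are themselves Witt equivalent to pointed classes on $2$-groups (the kernel of $\Sigma$ meets $\mathcal{W}_\mathrm{pt}$ nontrivially), so $\langle x_1\rangle$ and $\langle x_3,x_7\rangle$ can intersect, and verifying otherwise requires exactly the $2$-primary bookkeeping you wave at with ``use the explicit quadratic form $q$.'' The paper deliberately does not attempt this structure: its proof simply places the finite-order classes in $\mathcal{W}_\mathrm{pt}$, where all relations are known, and concentrates entirely on showing the infinite-order generators — the categories $\mathcal{C}(\mathfrak{so}_5,2m)_R^0$ for $m\geq3$, $m\neq6$, the categories $\mathcal{C}(\mathfrak{so}_5,2m+1)_\mathrm{ad}$ for $m\geq2$, $m\neq3$, together with $[\mathcal{C}(\mathfrak{so}_5,4)]=[\mathcal{C}(\mathfrak{sl}_2,5)_\mathrm{ad}]^{-2}$ — are free and independent. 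Your proposal would need to replace both the fabricated anisotropy argument and the $\mathbb{Z}/24\mathbb{Z}$ computation before it closes.
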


\begin{proof}
We have collected simple, completely anisotropic, nondegenerate fusion categories
\begin{equation}
\mathcal{C}(\mathfrak{so}_5,2m)_R^0\qquad\qquad\text{for }m\in\mathbb{Z}_{\geq3}\text{ and }m\neq6,\label{dyslecticeq}
\end{equation}
and $s$-simple, completely anisotropic, slightly nondegenerate fusion categories
\begin{equation}
\mathcal{C}(\mathfrak{so}_5,2m+1)_\mathrm{ad}\qquad\qquad\text{for }m\in\mathbb{Z}_{\geq2}\text{ and }m\neq3.\label{centralizereq}
\end{equation}
The categories of (\ref{dyslecticeq}) and (\ref{centralizereq}) are pairwise non-equivalent as the former are modular and the latter supermodular.  Categories of (\ref{dyslecticeq}) and (\ref{centralizereq}) are (internally) pairwise non-equivalent as they are all of distinct ranks.  Also observe the slightly degenerate images of the categories of (\ref{dyslecticeq}) in s$\mathcal{W}$ of the form $\mathcal{C}\boxtimes\text{sVec}$ are $s$-simple and completely anisotropic as $\mathcal{C}(\mathfrak{so}_5,2m)_R^0$ are unpointed for $m\geq3$ and $m\neq6$.  For those categories in (\ref{dyslecticeq}), either their images have infinite order in s$\mathcal{W}$ or $[\mathcal{C}(\mathfrak{so}_5,2m)_R^0]^2=[\text{Vec}]$.  This would necessitate $\xi(\mathcal{C}(\mathfrak{so}_5,2m)_R^0)=\pm1$.  We compute $\xi(\mathcal{C}(\mathfrak{so}_5,2m)_R^0)=\xi(\mathcal{C}(\mathfrak{so}_5,2m))=\exp(5m/(2m+3)\pi i)$, thus $\xi(\mathcal{C}(\mathfrak{so}_5,2m)_R^0)=\pm1$ if and only if $5m/(2m+3)\in\mathbb{Z}$.  But $2<5m/(2m+3)<5/2$ for $m>6$, leaving $m=1,6$ as the only candidate cases, which were discussed above.  These, along with $[\mathcal{C}(\mathfrak{so}_5,4)]$, are the generators of the torsion free part of the subgroup of $\mathcal{W}$ generated by the classes $[\mathcal{C}(\mathfrak{so}_5,k)]$.  The classes of finite order lie in $\mathcal{W}_\mathrm{pt}$ and so their relations are known.
\end{proof}

\end{subsection}


\begin{subsection}{$\mathcal{C}(\mathfrak{g}_2,k)$}\label{g22}
\begin{theorem}
All relations in $\mathcal{W}$ in the subgroup generated by $[\mathcal{C}(\mathfrak{g}_2,k)]$ are generated by
\begin{align}
[\mathcal{C}(\mathfrak{g}_2,3)]^4&=[\textnormal{Vec}],\text{ and} \\
[\mathcal{C}(\mathfrak{g}_2,4)]^8&=[\textnormal{Vec}].
\end{align}
\end{theorem}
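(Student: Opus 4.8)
The plan is to treat this level-by-level in the same fashion as Section \ref{beetoo}, using crucially that $Z_{\mathfrak{g}_2}$ is trivial: then $\mathcal{C}(\mathfrak{g}_2,k)_\text{pt}\simeq\text{Vec}$, there are no connected \'etale algebras of Type D, and each $\mathcal{C}(\mathfrak{g}_2,k)$ is analyzed directly as a modular tensor category rather than through a de-equivariantization. The classes $[\mathcal{C}(\mathfrak{g}_2,1)]$ and $[\mathcal{C}(\mathfrak{g}_2,2)]$ were already shown in Example \ref{ex:g} to have infinite order and to generate a copy of $\mathbb{Z}^2$, so the substance of the theorem is that the only torsion occurs at $k=3,4$ and that all remaining classes are free and mutually independent.

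For the two torsion relations I would exhibit conformal embeddings at levels $3$ and $4$. The central charges $c(\mathcal{C}(\mathfrak{g}_2,3))=42/7=6$ and $c(\mathcal{C}(\mathfrak{g}_2,4))=56/8=7$ match those of pointed modular categories (for instance $\mathcal{C}(\mathfrak{e}_6,1)$, with $\mathbb{Z}/3\mathbb{Z}$ fusion, and a pointed category of central charge $7$ such as $\mathcal{C}(\mathfrak{so}_{14},1)$), and the conformal-embedding mechanism of \cite[Section 6.2]{DMNO} then places $[\mathcal{C}(\mathfrak{g}_2,3)]$ and $[\mathcal{C}(\mathfrak{g}_2,4)]$ in $\mathcal{W}_\text{pt}$. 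Their orders follow from the multiplicative central charge $\xi=\exp(2\pi ic/8)$: one computes $\xi(\mathcal{C}(\mathfrak{g}_2,3))=-i$ and $\xi(\mathcal{C}(\mathfrak{g}_2,4))=\exp(7\pi i/4)$, so that $\xi^2\neq1$ and $\xi^4\neq1$ force the orders to be exactly $4$ and $8$. As one representative is a metric category on a $3$-group and the other on a $2$-group, they sit in the independent $3$-primary and $2$-primary summands of the classical Witt group of metric groups underlying $\mathcal{W}_\text{pt}$ \cite[Section 5.3]{DMNO}, and hence generate independent cyclic subgroups; the two displayed relations are therefore the only ones among the torsion classes.

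For $k\geq5$ I would mirror the generic analysis of Section \ref{beetoo}. Each $\mathcal{C}(\mathfrak{g}_2,k)$ is simple by Sawin's classification \cite[Theorem 1]{Sawin06}, unpointed because $Z_{\mathfrak{g}_2}$ is trivial, and completely anisotropic for $k\geq5$ since the exceptional connected \'etale algebras (quantum subgroups) of $\mathcal{C}(\mathfrak{g}_2,k)$ bounded via the modular-invariant data of \cite{banffgannon} and \cite{schopieray2} occur only at levels $3$ and $4$. Under $\Sigma:\mathcal{W}\to s\mathcal{W}$ the image of such a class satisfies conditions (i)--(iii) of Remark 5.11 of \cite{DNO}, so the only relation it can satisfy is the order-two relation $[\mathcal{C}(\mathfrak{g}_2,k)]=[\mathcal{C}(\mathfrak{g}_2,k)]^{-1}$, which would force $\xi(\mathcal{C}(\mathfrak{g}_2,k))=\pm1$. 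With $c=14k/(k+4)$ one has $\xi=\pm1$ if and only if $7k/(2(k+4))\in\mathbb{Z}$, i.e.\ if and only if $k=24$. Hence for every $k\geq5$ with $k\neq24$ the class has infinite order, and since the central charges are pairwise distinct across levels the corresponding classes are distinct free generators of $s\mathcal{W}_\infty$, hence $\mathbb{Z}$-independent and independent of the pointed torsion at $k=3,4$.

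The main obstacle is precisely the level $k=24$, where $c=12$ gives $\xi=-1$ and the central-charge obstruction to an order-two relation disappears; this is the phenomenon absent from the $\mathfrak{so}_5$ family, where $\xi$ is never $\pm1$ at generic level. Here I would instead rule out the self-relation directly: any braided equivalence $\mathcal{C}(\mathfrak{g}_2,24)\simeq\mathcal{C}(\mathfrak{g}_2,24)^\text{rev}$, which the order-two relation forces by complete anisotropy, induces an automorphism of the Grothendieck ring, and for $k$ this large the classification in \cite{gannon2002} makes every such automorphism trivial; the equivalence is therefore a gauge autoequivalence and imposes $\theta(X)=\theta(X)^{-1}$, i.e.\ $\theta(X)=\pm1$, for all simple $X$. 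Evaluating the twist formula $\theta(\lambda)=q^{\langle\lambda,\lambda+2\rho\rangle}$ of \cite[Theorem 3.3.20]{BaKi} on a small fundamental object shows $\theta\neq\pm1$ at $k=24$, eliminating the equivalence. The two delicate inputs to secure are thus the uniform complete anisotropy for $k\geq5$ and the triviality of all Grothendieck-ring automorphisms at large $k$, the latter being exactly what lets a single twist computation close the recalcitrant level $k=24$.
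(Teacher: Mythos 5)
Your proposal is correct and follows essentially the same route as the paper: conformal embeddings $G_{2,3}\subset E_{6,1}$ and $G_{2,4}\subset D_{7,1}$ placing the torsion classes in $\mathcal{W}_\mathrm{pt}$ where all relations are known, simplicity via Sawin and complete anisotropy via Gannon's modular invariant classification for $k\geq5$, the $s\mathcal{W}$ reduction to order-two self-relations with the central charge computation $\xi(\mathcal{C}(\mathfrak{g}_2,k))=\exp(7k/(2(k+4))\pi i)$ isolating $k=24$, and the elimination of $k=24$ by triviality of the Grothendieck ring automorphisms together with a twist check on a fundamental object (the paper uses $\theta(1,0)$). The only loose phrasing is that the exact orders $4$ and $8$ are not ``forced'' by $\xi$ alone, which only gives lower bounds; but since you also invoke the known structure of $\mathcal{W}_\mathrm{pt}$, exactly as the paper does, the argument is complete.
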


\begin{proof}
There are conformal embeddings $G_{2,3}\subset E_{6,1}$ and $G_{2,4}\subset D_{7,1}$.  Thus $[\mathcal{C}(\mathfrak{g}_2,3)]$ and $[\mathcal{C}(\mathfrak{g}_2,4)]$ lie in $\mathcal{W}_\mathrm{pt}$ where all relations are known.  Now note that all $\mathcal{C}(\mathfrak{g}_2,k)$ are simple \cite[Theorem 1]{Sawin06}, and for $k\neq3,4$ these categories are completely anisotropic due to a complete classification of modular invariants for $\mathfrak{g}_2$ the aforementioned work of Gannon \cite{banffgannon}.  The images of $\mathcal{C}(\mathfrak{g}_2,k)$ under the map $\mathcal{W}\to s\mathcal{W}$ via $\mathcal{C}\mapsto\text{sVec}\boxtimes\mathcal{C}$ are $s$-simple as $\mathcal{C}(\mathfrak{g}_2,k)$ are unpointed.  Therefore any relation in $s\mathcal{W}$ amongst $[\mathcal{C}(\mathfrak{g}_2,k)]$ is of the form $[\mathcal{C}(\mathfrak{g}_2,k)]=[\mathcal{C}(\mathfrak{g}_2,k)]^{-1}$ \cite[Remark 5.11]{DNO} which implies $\xi(\mathcal{C}(\mathfrak{g}_2,k))=\pm1$.  We compute $\xi(\mathcal{C}(\mathfrak{g}_2,k))=\exp(7k/(2(k+4))\pi i)$, thus $\xi(\mathcal{C}(\mathfrak{g}_2,k))=\pm1$ only if $k=4m$ for some $m\in\mathbb{Z}_{\geq0}$.  This is possible only if $7m/(m+1)\in\mathbb{Z}_{\geq1}$, thus $m=6$.  Lastly we verify that $\mathcal{C}(\mathfrak{g}_2,24)\not\simeq\mathcal{C}(\mathfrak{g}_2,24)^\text{rev}$ by noting there are no nontrivial automorphisms of the Grothendieck ring \cite[Theorem 5.1]{gannon1996} which necessitates $\theta(1,0)=\theta(1,0)^{-1}$ and this is false.  Moreover $[\mathcal{C}(\mathfrak{g}_2,k)]$ has infinite order unless $k=3,4$.
\end{proof}
As explained in \cite[Section 6.4(8)]{DMNO} via conformal embedding, $[\mathcal{C}(\mathfrak{g}_2,1)]=[\mathcal{C}(\mathfrak{sl}_2,28)]$, both having infinite order in $\mathcal{W}$.  One could also note that $\mathcal{C}(\mathfrak{sl}_2,3)_\mathrm{ad}\simeq\mathcal{C}(\mathfrak{g}_2,1)$ hence any Witt group relation involving $\mathcal{C}(\mathfrak{g}_2,1)$ will be described by a classification of relations in Type A.  Similarly $\mathcal{C}(\mathfrak{g}_2,2)$ is rank 4, and one verifies using the classification of rank 4 modular tensor categories \cite{rowell2009}, that $\mathcal{C}(\mathfrak{g}_2,2)$ is a Galois conjugate of $\mathcal{C}(\mathfrak{sl}_2,7)_\mathrm{ad}$.  Via conformal embedding $A_{1,7}\times G_{2,2}\subseteq E_{7,1}$,
\begin{align}
&&[\mathcal{C}(\mathfrak{sl}_2,7)][\mathcal{C}(\mathfrak{g}_2,2)]=[\mathcal{C}(E_7,1)]&=[\mathcal{C}(\mathfrak{sl}_2,1)]^{-1} \nonumber\\
\Rightarrow && [\mathcal{C}(\mathfrak{sl}_2,1)][\mathcal{C}(\mathfrak{sl}_2,7)][\mathcal{C}(\mathfrak{g}_2,2)]&=[\mathrm{Vec}].
\end{align}
\end{subsection}

\begin{subsection}{All Witt group relations for rank $\leq2$}\label{wittrel}

\par \par First we compare the completely anisotropic representative lists above for $\mathfrak{so}_5$ and $\mathfrak{g}_2$ with those in (4.1) through (4.3) of \cite{schopieray2017} for $\mathfrak{sl}_3$ and those at the end of Section 5.5 of \cite{DNO} for $\mathfrak{sl}_2$, accumulated here for ease for all $m\in\mathbb{Z}_{\geq1}$.
\begin{align}
&\mathcal{C}(\mathfrak{g}_2,m) &&\text{for }m\geq5 \label{g2a}\\[0.2cm]
&\mathcal{C}(\mathfrak{so}_5,2m)_R^0 &&\text{for }m\neq1,2,6 \label{so5a}\\
&\mathcal{C}(\mathfrak{so}_5,2m+1)_\mathrm{ad} &&\text{for }m\neq1,3 \label{so5b}\\[0.2cm]
&\mathcal{C}(\mathfrak{sl}_3,3m+1)_\mathrm{ad} && \label{sl3a}\\
&\mathcal{C}(\mathfrak{sl}_3,3m+2)_\mathrm{ad} &&\text{for }m\neq1\text{ and }m=0 \label{sl3b}\\
&\mathcal{C}(\mathfrak{sl}_3,3m)_R^0 &&\text{for }m\neq1,3,7 \label{sl3c}\\[0.2cm]
&\mathcal{C}(\mathfrak{sl}_2,2m+1)_\mathrm{ad} &&  \label{sl2a}\\
&\mathcal{C}(\mathfrak{sl}_2,4m+2)_\mathrm{ad} &&\text{for }m\neq1,2\label{sl2b}\\
&\mathcal{C}(\mathfrak{sl}_2,4m)_R^0 &&\text{for }m\neq1,2,7\label{sl2c}
\end{align}

\par We are searching for any categories in the above lists which have a braided equivalence between them, or a braid-reversing equivalence between them.  Any such occurence will be called a \emph{coincidence}, for brevity.  Note that $7/2>7\ell/(2\ell+8)>3$ for $5\leq\ell\leq25$ and $5/2>5m/(2m+3)>2$ for $m\geq7$, so in this range categories from (\ref{g2a}) and (\ref{so5a}) are not coincidental by comparing central charge.  There is only one coincidence of rank in the unaccounted for range: $\mathcal{C}(\mathfrak{g}_2,7)$ and $\mathcal{C}(\mathfrak{so}_5,8)_R^0$, but using the above formulas, their central charges are neither equal nor inverse to one another.  Hence lists (\ref{g2a}) and (\ref{so5a}) contain no coincidences, while both contain no coincidences when compared to list (\ref{so5b}) as the former are simple.  For this reason, the only coincidences involving list (\ref{so5b}) would be with list (\ref{sl2b}) (and vice-versa).  But the object of minimal dimension in either case is that which corresponds to the shortest dominant root, with dimension $[3]$ in the case of $\mathfrak{sl}_2$ and of $[5][6][2]^{-1}[3]^{-1}$ in the case of $\mathfrak{so}_5$.  For $m=2$, $\dim(1,0)>3$ in $\mathcal{C}(\mathfrak{so}_5,2m+1)_\mathrm{ad}$, and $[5][6][2]^{-1}[3]^{-1}$ is a strictly increasing function of $m\in\mathbb{Z}_{\geq2}$ (approaching $5$ as $m\to\infty$).  Since $[3]\leq3$ there are no coincidences between lists (\ref{so5b}) and (\ref{sl2b}).

\par Next note that list (\ref{g2a}) has no coincidences with (\ref{sl2a})--(\ref{sl2c}) as $\mathcal{C}(\mathfrak{sl}_2,k)$ for $k\in\mathbb{Z}_{\geq1}$ are multiplicity-free, while $\dim\text{Hom}((0,2)\otimes(0,2),(0,2))=2$ in $\mathcal{C}(\mathfrak{g}_2,k)$ with $k\geq3$.  The cases $k<5$ were discussed in Section \ref{g22}.  The categories $\mathcal{C}(\mathfrak{g}_2,k)$ for $k\geq5$ have no non-trivial fusion ring automorphisms \cite[Theorem 3]{gannon2002} which implies that there are no coincidences with the categories in (\ref{sl3a})--(\ref{sl3c}), except in the case when these categories are self-dual.  This is the case for $\mathcal{C}(\mathfrak{sl}_3,2)_\mathrm{ad}$ and $\mathcal{C}(\mathfrak{sl}_3,6)_R^0$, whose ranks are too small to merit a coincidence.

\par It remains to compare list (\ref{so5a}) with lists (\ref{sl3a})--(\ref{sl2c}).  One useful fact is that $\mathcal{C}(\mathfrak{so}_5,2m)_R^0$ for $m\geq3$ have at least two pairs of objects with equal dimensions.  The categories in (\ref{sl2a}) have distinct dimensions and those in (\ref{sl2c}) have exactly one pair of objects with the same dimension.  For comparing with lists (\ref{sl3a})--(\ref{sl3c}) we proceed by central charge computation using \cite[Corollary 3.4.1]{schopieray2017}.  For $\ell\geq3$, $3/2>9\ell/(6\ell+8)>1$, hence there are no coincidences between (\ref{sl3a}) and (\ref{so5a}) except possibly involving $\mathcal{C}(\mathfrak{sl}_3,4)_\mathrm{ad}$ or $\mathcal{C}(\mathfrak{sl}_3,7)_\mathrm{ad}$.  By comparing ranks, only the former is feasible, having the same rank as $\mathcal{C}(\mathfrak{so}_5,2)_R^0$, but this category is pointed, while $\mathcal{C}(\mathfrak{sl}_3,4)_\mathrm{ad}$ is not.  For $m\geq2$, $2>2m/(m+10)>1$, hence there are no coincidences between (\ref{sl3c}) and (\ref{so5a}) by computing central charge.  We end with the most finicky of arguments, showing there are no coincidences between lists (\ref{sl3b}) and (\ref{so5a}).  We do so by noting there are at least $(1/2)(n^2+n+4)$ self-dual objects in $\mathcal{C}(\mathfrak{so}_5,2n)_R^0$, disregarding whether the $2n$ decomposable simple $R$-modules are self-dual or not.  Comparatively, there are at most $k/2+1$ self-dual objects in $\mathcal{C}(\mathfrak{sl}_3,k)_\mathrm{ad}$.  Hence for a fixed $n\in\mathbb{Z}_{\geq1}$, to have $(1/2)(n^2+n+4)$ self-dual objects in $\mathcal{C}(\mathfrak{sl}_3,k)_\mathrm{ad}$ we must consider $k\geq n(n+1)$.  But the rank of $\mathcal{C}(\mathfrak{sl}_3,3k+2)_\mathrm{ad}$ is $(1/2)k(3k+1)$.  Hence for a fixed $n$, the corresponding categories from (\ref{sl3b}) with at least the same number of self-dual objects as $\mathcal{C}(\mathfrak{so}_5,2n)_R^0$, have rank at least $(1/2)n(n+1)(3n^2+3n+1)$ which is greater than $(1/2)(n+1)(n+4)$, the rank of $\mathcal{C}(\mathfrak{so}_5,2n)_R^0$ for all $n\in\mathbb{Z}_{\geq1}$.  Moreover there are no coincidences between (\ref{sl3b}) and (\ref{so5a}).

\par In summary, all non-trivial relations are described in Sections \ref{beetoo} and \ref{g22} above, \cite[Section 5.5]{DNO}, and \cite[Theorems 3,4]{schopieray2017}.  The list of known relations is sufficiently long that beyond rank 2 it will not be productive to compile the entire classification of relations once it is discovered/proven.

\end{subsection}

\begin{section}{Observations, problems and questions}\label{quest}

\par There are several approachable areas of research to explore using the results of this paper.
\begin{problem}
Classify all degenerate fusion subcategories of $\mathcal{C}(\mathfrak{g},k)_R^0$.\end{problem}
By Theorem \ref{th:simplicity}, these degenerate fusion categories will only appear when $\mathfrak{g}$ is of Type A or D, and by computing twists of simple currents, the list of cases can be reduced substantially.  The obstacle is that the dimensions of simple objects are less helpful without explicit $\boxtimes$-factorization.  But Davydov, Nikshych, and Ostrik \cite{DNO} describe $\boxtimes_\mathcal{E}$, a \emph{tensor product over a symmetric fusion subcategory} $\mathcal{E}$, attributed to Greenough \cite{greenough2010monoidal}.  In particular there is an analog to M\" uger's decomposition for braided fusion categories which are \emph{nondegenerate over $\mathcal{E}$}, i.e. \!$\mathcal{C}'\simeq\mathcal{E}$.  Their Proposition 4.3 states that if $\mathcal{E}\subset\mathcal{C}$ is any symmetric fusion subcategory, $\mathcal{C}$ is nondegenerate over $\mathcal{E}$, and $\mathcal{D}\subset\mathcal{C}$ is a fusion subcategory which is also nondegenerate over $\mathcal{E}$, then $\mathcal{D}\boxtimes_\mathcal{E}C_\mathcal{C}(\mathcal{D})\simeq\mathcal{C}$ is a braided equivalence.  The rank-level embedding $\mathfrak{so}(2n)_4\times\mathfrak{su}(2)_{2n}\subseteq\mathfrak{sp}(2(2n))_1$ for $n\in\mathbb{Z}_{\geq4}$ used in Lemma \ref{lem:B} gives interesting examples of this construction.

\begin{lemma}\label{lemd}
If $n\in\mathbb{Z}_{\geq7}$ is odd and $R\in\mathcal{C}(\mathfrak{so}_{2n},4)_\mathrm{pt}$ is the unique nontrivial connected \'etale algebra of Type D, then
\begin{equation}\label{lem:d}
\mathcal{C}(\mathfrak{so}_{2n},4)_R^0\simeq(\mathcal{C}(\mathfrak{sl}_2,2n)^\mathrm{rev}\boxtimes\mathcal{C}(\mathfrak{sp}_{4n},1))_S^0
\end{equation}
is a braided equivalence, where $S$ is the unique nontrivial connected \'etale algebra of Type D in $\mathcal{C}(\mathfrak{sl}_2,2n)^\mathrm{rev}\boxtimes\mathcal{C}(\mathfrak{sp}_{4n},1)$.
\end{lemma}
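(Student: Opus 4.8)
The plan is to run the template of Lemma \ref{lem:B} and Lemma \ref{lemd2}: convert the asserted equivalence into the statement that both sides are completely anisotropic representatives of a common Witt class, then pin down the condensation data by a global dimension count. First I would feed the rank-level (conformal) embedding $\mathfrak{so}(2n)_4\times\mathfrak{su}(2)_{2n}\subseteq\mathfrak{sp}(4n)_1$ into \cite[Proposition 5.4]{DMNO} to obtain
\begin{equation}
[\mathcal{C}(\mathfrak{so}_{2n},4)]=[\mathcal{C}(\mathfrak{sl}_2,2n)^\mathrm{rev}\boxtimes\mathcal{C}(\mathfrak{sp}_{4n},1)].
\end{equation}
Because the local module construction preserves Witt class, $\mathcal{C}(\mathfrak{so}_{2n},4)_R^0$ and $(\mathcal{C}(\mathfrak{sl}_2,2n)^\mathrm{rev}\boxtimes\mathcal{C}(\mathfrak{sp}_{4n},1))_S^0$ lie in this same class, so \cite[Proposition 5.15(iii)]{DMNO} produces connected \'etale algebras $A_1,A_2$ and a braided equivalence $(\mathcal{C}(\mathfrak{so}_{2n},4)_R^0)^0_{A_1}\simeq((\mathcal{C}(\mathfrak{sl}_2,2n)^\mathrm{rev}\boxtimes\mathcal{C}(\mathfrak{sp}_{4n},1))_S^0)^0_{A_2}$; everything then reduces to showing $A_1=A_2=\mathbbm{1}$.

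Before attacking that, I would record the structure of the right-hand side. A twist computation shows that for $n$ odd the pointed parts $\mathcal{C}(\mathfrak{sl}_2,2n)_\mathrm{pt}$ and $\mathcal{C}(\mathfrak{sp}_{4n},1)_\mathrm{pt}$ are each $\mathrm{sVec}$, generated by fermions $f_1,f_2$; since these sit in different Deligne factors they mutually centralize, so $f_1\boxtimes f_2$ is a boson and $S=\mathbbm{1}\oplus(f_1\boxtimes f_2)$ is the unique Tannakian subalgebra of $\mathrm{sVec}\boxtimes\mathrm{sVec}$. Thus the right-hand side is the modular condensation of this diagonal; the two factors cannot be amalgamated as in Lemma \ref{lemd2} because $\mathcal{C}(\mathfrak{sl}_2,2n)$ and $\mathcal{C}(\mathfrak{sp}_{4n},1)$, while sharing fusion rules \cite[Theorem 5.1]{gannon2002}, are inequivalent modular tensor categories. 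Moreover, for $n\geq7$ odd both factors are themselves completely anisotropic: by the ADE picture (Example \ref{ex:ade}) their common fusion rules admit a Type D algebra only when $4\mid2n$ and an exceptional Type E algebra only at levels $10,28$, none of which occur.

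The hard part will be establishing complete anisotropy of the right-hand side, forcing $A_2=\mathbbm{1}$. A connected \'etale algebra of $(\mathcal{C}(\mathfrak{sl}_2,2n)^\mathrm{rev}\boxtimes\mathcal{C}(\mathfrak{sp}_{4n},1))_S^0$ corresponds to one of $\mathcal{C}(\mathfrak{sl}_2,2n)^\mathrm{rev}\boxtimes\mathcal{C}(\mathfrak{sp}_{4n},1)$ containing $S$, and since each factor is completely anisotropic, the classification of \'etale algebras in Deligne products \cite[Theorem 3.6]{DNO} presents any such algebra as the graph of a braided equivalence matching fusion subcategories of the two factors. By Sawin's classification the only fusion subcategories available are $\mathrm{Vec}$, $\mathrm{sVec}$, the adjoint, and the whole category, so beyond $S$ (the graph over $\mathrm{sVec}$) a strictly larger algebra would force either a full braided equivalence $\mathcal{C}(\mathfrak{sl}_2,2n)\simeq\mathcal{C}(\mathfrak{sp}_{4n},1)^\mathrm{rev}$ or a braided equivalence of adjoints $\mathcal{C}(\mathfrak{sl}_2,2n)_\mathrm{ad}\simeq\mathcal{C}(\mathfrak{sp}_{4n},1)_\mathrm{ad}$. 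The first I would dispatch by central charge: one computes $c(\mathfrak{sl}_2,2n)+c(\mathfrak{sp}_{4n},1)=4n$, which is never $\equiv0\pmod8$ for $n$ odd, so $\xi(\mathcal{C}(\mathfrak{sl}_2,2n))\neq\xi(\mathcal{C}(\mathfrak{sp}_{4n},1))^{-1}$. The genuinely delicate step is the second: here central charge is insufficient (the two minimal modular extensions may legitimately differ by a half-integer in the sixteen-fold way), so I must instead compare the multisets of twists of the two supermodular adjoints—a braided invariant—and verify that they differ. This is exactly where the hypothesis $n\geq7$ enters: the excluded value $n=5$ yields $\mathcal{C}(\mathfrak{sl}_2,10)$, whose exceptional Type E algebra destroys anisotropy outright.

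With $A_2=\mathbbm{1}$ in hand, I would finish precisely as in Lemma \ref{lem:B}. Since $\dim R=\dim S=2$, a direct evaluation of the quantum Weyl dimension formula (or the rank-level duality between $\mathfrak{so}(2n)_4$ and $\mathfrak{so}(4)_{2n}\cong\mathfrak{su}(2)_{2n}\times\mathfrak{su}(2)_{2n}$) gives $\dim\mathcal{C}(\mathfrak{so}_{2n},4)=\dim\mathcal{C}(\mathfrak{sl}_2,2n)^2$, so the two sides of the equivalence have equal global dimension. As $\dim(\mathcal{C}(\mathfrak{so}_{2n},4)_R^0)^0_{A_1}=\dim\mathcal{C}(\mathfrak{so}_{2n},4)_R^0/(\dim A_1)^2$, this forces $\dim A_1=1$, hence $A_1=\mathbbm{1}$, and the braided equivalence of the lemma follows.
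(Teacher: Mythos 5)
Your architecture is the paper's: the rank-level embedding feeding \cite[Proposition 5.4]{DMNO}, the passage to \'etale algebras $A_1,A_2$ via \cite[Proposition 5.15(iii)]{DMNO}, the reduction of \'etale algebras in the Deligne product to graphs of braided equivalences between matched fusion subcategories via \cite[Theorem 3.6]{DNO} together with Sawin's list, and the global dimension count at the end. (The paper does the bookkeeping on the uncondensed product --- it shows $S$ is the \emph{unique} nontrivial connected \'etale algebra there, so $A_2\in\{\mathbbm{1},S\}$, and dimensions force $A_2=S$, $A_1=\mathbbm{1}$; your version, showing the condensed side is completely anisotropic, is equivalent.) The problem is the step you yourself flag as delicate. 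You propose to rule out a braided equivalence $\mathcal{C}(\mathfrak{sl}_2,2n)_\mathrm{ad}\simeq\mathcal{C}(\mathfrak{sp}_{4n},1)_\mathrm{ad}$ by verifying that the \emph{multisets} of twists differ, but they do not always differ. The conformal weights satisfy the identity $h^{\mathfrak{sp}_{4n}}_{\lambda_j}+h^{\mathfrak{sl}_2}_j=\frac{j(4n+2-j)}{8(n+1)}+\frac{j(j+2)}{8(n+1)}=\frac{j}{2}\in\mathbb{Z}$ for even $j$, so the two twist multisets are complex conjugates of one another; whenever the $\mathfrak{sl}_2$ multiset is closed under conjugation the test returns nothing. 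This happens at $n=7$, the \emph{first} case of the lemma: for $j=0,2,\dots,14$ the values $j(j+2)/64 \bmod 1$ are $0,\frac18,\frac38,\frac68,\frac28,\frac78,\frac58,\frac48$, i.e.\! the twist multiset of $\mathcal{C}(\mathfrak{sl}_2,14)_\mathrm{ad}$ is exactly $\{e^{2\pi i m/8}:0\leq m\leq7\}$, which equals its own conjugate and hence the multiset for $\mathcal{C}(\mathfrak{sp}_{28},1)_\mathrm{ad}$. Your invariant is too coarse precisely where you need it, so as written the proof fails at $n=7$.

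The repair is to compare twists \emph{objectwise} along a fusion-ring isomorphism, which is what the paper's template (as in Lemma \ref{lem:B}, invoking the classification of fusion symmetries \cite[Theorem 3]{gannon2002}) amounts to: any braided equivalence fixes $\mathbbm{1}$ and preserves dimensions, so it must send the object $2$ to $2$ or to $2n-2$, and one checks $\theta^{\mathfrak{sl}_2}(2)=e^{2\pi i/(n+1)}$ against $\theta^{\mathfrak{sp}}(\lambda_2)=e^{-2\pi i/(n+1)}$ and $\theta^{\mathfrak{sp}}(\lambda_{2n-2})=e^{\pi i(n-1)(n+2)/(n+1)}$; equality would force $(n+1)\mid 2$ or $(n+1)\mid 4$, impossible for $n\geq7$. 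A secondary soft spot: you justify complete anisotropy of $\mathcal{C}(\mathfrak{sp}_{4n},1)$ ``by the ADE picture for the common fusion rules,'' but commutativity of an algebra depends on the braiding, not the Grothendieck ring --- the inequivalence of these two braidings is the whole point of the odd-$n$ case, so the ADE classification for $\mathcal{C}(\mathfrak{sl}_2,2n)$ does not transfer verbatim. (The conclusion is salvageable from the $\mathfrak{sp}$ twists themselves, e.g.\! the would-be Type D algebra has $\theta(\lambda_{2n})=(-1)^n=-1$; the paper sidesteps the issue entirely by using anisotropy only of the $\mathfrak{sl}_2$ factor and constraining the algebra $A_3$ in the $\mathfrak{sp}$ factor by the equality of global dimensions of the two factors.)
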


\begin{proof}
We will use the same basic proof as in Lemma \ref{lem:B}.  In this case $\mathcal{C}(\mathfrak{sp}_{4n},1)$ and $\mathcal{C}(\mathfrak{sl}_2,2n)$ are equivalent as fusion categories (not as modular tensor categories), but both are prime.  Hence the Witt group relation from the rank-level embedding only implies
\begin{equation}
[\mathcal{C}(\mathfrak{so}_{2n},4)_R^0]=[\mathcal{C}(\mathfrak{sl}_2,2n)^\mathrm{rev}\boxtimes\mathcal{C}(\mathfrak{sp}_{4n},1)],
\end{equation}
and moreover there exist connected \'etale algebras $A_1,A_2$ such that
\begin{equation}
(\mathcal{C}(\mathfrak{so}_{2n},4)_R^0)_{A_1}^0\simeq(\mathcal{C}(\mathfrak{sl}_2,2n)^\mathrm{rev}\boxtimes\mathcal{C}(\mathfrak{sp}_{4n},1))_{A_2}^0
\end{equation}
is a braided equivalence.  Here the category $\mathcal{C}(\mathfrak{sl}_2,2n)^\mathrm{rev}\boxtimes\mathcal{C}(\mathfrak{sp}_{4n},1)$ is not completely anisotropic.  We claim that it has a unique nontrivial connected \'etale algebra when $n\neq5$.  Indeed $\mathcal{C}(\mathfrak{sl}_2,2n)^\mathrm{rev}$ is completely anisotropic if $n\neq5$ by the ADE classification so any nontrivial connected \'etale algebra in the product would arise from a connected \'etale algebra $A_3\in\mathcal{C}(\mathfrak{sp}_{4n},1)$ and an equivalence of fusion subcategories $\mathcal{C}\simeq\mathcal{D}^\mathrm{rev}$, where $\mathcal{C}$ is a fusion subcategory of $\mathcal{C}(\mathfrak{sl}_2,2n)^\mathrm{rev}$ and $\mathcal{D}$ is a fusion subcategory of $\mathcal{C}(\mathfrak{sp}_{4n},1)^0_{A_3}$.  But $\dim(\mathcal{C}(\mathfrak{sl}_2,2n))=\dim(\mathcal{C}(\mathfrak{sp}_{4n},1))$ and the fusion subcategories of each factor are known, thus the only possibility is $A_3=\mathbbm{1}$ with the equivalence $\mathcal{C}(\mathfrak{sl}_2,2n)^\mathrm{rev}_\mathrm{ad}\simeq\mathcal{C}(\mathfrak{sp}_{4n},1)_\mathrm{ad}^\mathrm{rev}$.  The non-trivial connected \'etale algebra is evidently $S$, the product of invertible objects of twist -1.  Comparing global dimensions implies our result as before.
\end{proof}

\par The benefit of Lemma \ref{lemd} is ease of computation.  Not many simple $R$-modules are free but almost all simple $S$-modules are free, which allows the use of the monoidal structure of the free $S$-module functor to be used with the existing knowledge of the fusion rules of $\mathcal{C}(\mathfrak{sl}_2,k)$.

\begin{corollary}
If $n\in\mathbb{Z}_{\geq7}$ is odd, then there is a braided equivalence
\begin{equation}
(\mathcal{C}(\mathfrak{so}_{2n},4)_R^0)_\mathrm{ad}^\mathrm{rev}\simeq\mathcal{C}(\mathfrak{sl}_2,2n)_\mathrm{ad}^{\boxtimes_\mathrm{sVec}2}.
\end{equation}
\end{corollary}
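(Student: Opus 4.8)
The plan is to transport the computation through the braided equivalence of Lemma \ref{lemd} and then pass to the adjoint subcategory, using that a braided equivalence carries $\mathcal{C}_\mathrm{ad}$ to $\mathcal{C}_\mathrm{ad}$ (the adjoint is $\otimes$-generated by the objects $X\otimes X^\ast$) and commutes with passage to the reverse category. Writing $\mathcal{M}:=\mathcal{C}(\mathfrak{so}_{2n},4)_R^0$, $\mathcal{A}:=\mathcal{C}(\mathfrak{sl}_2,2n)^\mathrm{rev}$, and $\mathcal{B}:=\mathcal{C}(\mathfrak{sp}_{4n},1)$, Lemma \ref{lemd} gives a braided equivalence $\mathcal{M}\simeq(\mathcal{A}\boxtimes\mathcal{B})_S^0$ with $S=\mathbbm{1}\oplus(f_\mathcal{A}\boxtimes f_\mathcal{B})$ the diagonal algebra built on the two fermionic simple currents. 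It therefore suffices to identify $((\mathcal{A}\boxtimes\mathcal{B})_S^0)_\mathrm{ad}^\mathrm{rev}$ with $\mathcal{C}(\mathfrak{sl}_2,2n)_\mathrm{ad}^{\boxtimes_\mathrm{sVec}2}$.

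First I would pin down the pointed part. The free module $f_\mathcal{A}\otimes S\cong f_\mathcal{A}\oplus f_\mathcal{B}$ is a simple local invertible object $\overline f$ of twist $-1$ (simple by Equation \eqref{ostlemma}), and Theorem \ref{th:simplicity} bounds $\mathcal{O}(\mathcal{M}_\mathrm{pt})$ inside $Z_{\mathfrak{so}_{2n}}/H\cong\mathbb{Z}/2\mathbb{Z}$; since $\overline f$ is a nontrivial fermionic invertible this forces $\mathcal{M}_\mathrm{pt}\simeq\mathrm{sVec}=\langle\overline f\rangle$. Hence $\mathcal{M}_\mathrm{ad}=C_\mathcal{M}(\overline f)$ is the index-two subcategory with $(\mathcal{M}_\mathrm{ad})'\simeq\mathrm{sVec}$ (Example \ref{ex:deeeq}). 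Tracing underlying objects through the free-module functor, an object of $\mathcal{M}_\mathrm{ad}$ centralizes both $\overline f$ and $S$, so its underlying object centralizes all of $(\mathcal{A}\boxtimes\mathcal{B})_\mathrm{pt}=\langle f_\mathcal{A},f_\mathcal{B}\rangle$ and thus lies in $\mathcal{A}_\mathrm{ad}\boxtimes\mathcal{B}_\mathrm{ad}=C_{\mathcal{A}\boxtimes\mathcal{B}}((\mathcal{A}\boxtimes\mathcal{B})_\mathrm{pt})$. The crucial point is that $f_\mathcal{A}\boxtimes f_\mathcal{B}$ is \emph{transparent} in $\mathcal{A}_\mathrm{ad}\boxtimes\mathcal{B}_\mathrm{ad}$ (each fermion is transparent in its own adjoint), so $S$ is the regular algebra of a Tannakian $\mathrm{Rep}(\mathbb{Z}/2\mathbb{Z})\subset\mathcal{A}_\mathrm{ad}\boxtimes\mathcal{B}_\mathrm{ad}$; every module over it is automatically local, and the resulting de-equivariantization glues the two transparent fermions into one. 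This is precisely the relative Deligne product, giving $\mathcal{M}_\mathrm{ad}\simeq\mathcal{A}_\mathrm{ad}\boxtimes_\mathrm{sVec}\mathcal{B}_\mathrm{ad}$, which I would justify cleanly by applying Proposition 4.3 of \cite{DNO} to the two mutually centralizing images of $\mathcal{A}_\mathrm{ad}$ and $\mathcal{B}_\mathrm{ad}$ inside $\mathcal{M}_\mathrm{ad}$, each nondegenerate over $\mathrm{sVec}$.

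It then remains to identify the two factors. Here $\mathcal{A}_\mathrm{ad}=(\mathcal{C}(\mathfrak{sl}_2,2n)_\mathrm{ad})^\mathrm{rev}$ tautologically, while the Grothendieck-ring isomorphism of $\mathcal{C}(\mathfrak{sp}_{4n},1)$ with $\mathcal{C}(\mathfrak{sl}_2,2n)$ together with the twist computation of Lemma \ref{lem:B} adapted to $\mathcal{C}(\mathfrak{sp}_{4n},1)$ yields $\mathcal{B}_\mathrm{ad}=\mathcal{C}(\mathfrak{sp}_{4n},1)_\mathrm{ad}\simeq\mathcal{C}(\mathfrak{sl}_2,2n)_\mathrm{ad}^\mathrm{rev}$. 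Thus both factors equal $\mathcal{C}(\mathfrak{sl}_2,2n)_\mathrm{ad}^\mathrm{rev}$, and since reversal distributes over $\boxtimes$ and commutes with de-equivariantization, $\mathcal{M}_\mathrm{ad}\simeq(\mathcal{C}(\mathfrak{sl}_2,2n)_\mathrm{ad}^{\boxtimes_\mathrm{sVec}2})^\mathrm{rev}$; reversing once more gives the claim. I expect the main obstacle to be the middle step: verifying that $\mathcal{M}_\mathrm{pt}$ is no larger than $\mathrm{sVec}$ (so the adjoint is genuinely index two) and, above all, correctly reconciling that condensing the transparent boson $f_\mathcal{A}\boxtimes f_\mathcal{B}$ in the \emph{degenerate} category $\mathcal{A}_\mathrm{ad}\boxtimes\mathcal{B}_\mathrm{ad}$ is a de-equivariantization rather than a modular condensation, so that the global dimension drops only by $|\mathrm{Rep}(\mathbb{Z}/2\mathbb{Z})|=2$ and matches $\dim\mathcal{M}_\mathrm{ad}=\tfrac12\dim\mathcal{M}$. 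Scrupulously tracking which factors carry a reverse is what forces both copies to be $\mathcal{C}(\mathfrak{sl}_2,2n)_\mathrm{ad}^\mathrm{rev}$, and hence accounts for the single reverse in the statement.
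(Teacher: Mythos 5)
Your proposal is correct and matches the paper's argument in all essentials: both transport the problem through Lemma \ref{lemd}, identify inside $(\mathcal{C}(\mathfrak{sl}_2,2n)^\mathrm{rev}\boxtimes\mathcal{C}(\mathfrak{sp}_{4n},1))_S^0$ the two mutually centralizing slightly degenerate subcategories generated by the free $S$-modules on $(2)\boxtimes(0)$ and $(0)\boxtimes(2)$ (each $\simeq\mathcal{C}(\mathfrak{sl}_2,2n)_\mathrm{ad}^\mathrm{rev}$, using the twist computation as in Lemma \ref{lem:B}), and conclude via Proposition 4.3 of \cite{DNO}. Your additional verifications --- that $\mathcal{M}_\mathrm{pt}\simeq\mathrm{sVec}$, that the diagonal boson is transparent so the condensation is a de-equivariantization, and the global dimension count --- are details the paper leaves implicit but are consistent with its proof.
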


\begin{proof}
The category $\mathcal{C}(\mathfrak{so}_{2n},4)_R^0$ is nondegenerate (over $\mathrm{Vec}$), but $(\mathcal{C}(\mathfrak{so}_{2n},4)_R^0)_\mathrm{ad}$ is nondegenerate over $\mathrm{sVec}$ (slightly degenerate, or supermodular).  The fusion subcategories $\otimes_S$-generated by the simple free $S$-modules $\overline{(2)\boxtimes(0)}$ and $\overline{(0)\boxtimes(2)}$ are equivalent to $\mathcal{C}(\mathfrak{sl}_2,2n)_\mathrm{ad}^\mathrm{rev}$, and $\mathcal{C}(\mathfrak{sp}_{4n},1)_\mathrm{ad}\simeq\mathcal{C}(\mathfrak{sl}_2,2n)_\mathrm{ad}^\mathrm{rev}$, respectively.  These are known to be slightly degenerate and our result follows from \cite[Proposition 4.3]{DNO}.
\end{proof}

\par We conjecture that a similar, but twisted result is true when $n$ is even.

\begin{conjecture}
If $n\in\mathbb{Z}_{\geq4}$ is even, and $R$ is the regular algebra of the ``diagonal'' Tannakian subcategory of $\mathcal{C}(\mathfrak{so}_{2n},4)$ of rank 2, then there is a braided equivalence
\begin{equation}
(\mathcal{C}(\mathfrak{so}_{2n},4)_R^0)_\mathrm{ad}^\mathrm{rev}\simeq\mathcal{C}(\mathfrak{sl}_2,2n)_\mathrm{ad}^{\boxtimes_{\mathrm{Rep}(\mathbb{Z}/2\mathbb{Z})}2}.
\end{equation}
\end{conjecture}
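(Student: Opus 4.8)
The plan is to mirror the proof of the corollary to Lemma~\ref{lemd}, with the Tannakian center $\mathrm{Rep}(\mathbb{Z}/2\mathbb{Z})$ playing the role that $\mathrm{sVec}$ played for odd $n$. The reason the two cases diverge is a single twist computation: the nontrivial simple current $g$ of $\mathcal{C}(\mathfrak{sl}_2,2n)$ has $\theta(g)=(-1)^n$ and lies in $\mathcal{C}(\mathfrak{sl}_2,2n)_\mathrm{ad}$ because $2n$ is even, so for $n$ even its transparent subcategory $(\mathcal{C}(\mathfrak{sl}_2,2n)_\mathrm{ad})'$ is $\mathcal{E}:=\mathrm{Rep}(\mathbb{Z}/2\mathbb{Z})$ rather than $\mathrm{sVec}$, and the correct gluing operation is $\boxtimes_{\mathcal{E}}$. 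The engine is \cite[Proposition 4.3]{DNO}: it suffices to produce inside $(\mathcal{C}(\mathfrak{so}_{2n},4)_R^0)_\mathrm{ad}$ two fusion subcategories, each braided equivalent to $\mathcal{C}(\mathfrak{sl}_2,2n)_\mathrm{ad}^\mathrm{rev}$, each nondegenerate over $\mathcal{E}$, and each the relative commutant of the other; their $\boxtimes_{\mathcal{E}}$-product is then the full adjoint, and reversing gives the conjecture.

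Two inputs are needed on the left-hand side. First, the residual simple current $\bar s\in(\mathcal{C}(\mathfrak{so}_{2n},4)_R^0)_\mathrm{pt}$ --- the image of the spinor classes, which are identified after de-equivariantizing by the diagonal vector class --- must be shown to lie in the adjoint, to have trivial twist, and to generate the entire transparent subcategory of the adjoint, so that $(\mathcal{C}(\mathfrak{so}_{2n},4)_R^0)_\mathrm{ad}$ is nondegenerate over $\mathcal{E}=\langle\bar s\rangle\simeq\mathrm{Rep}(\mathbb{Z}/2\mathbb{Z})$; this is a twist computation combined with the double-centralizer property \cite[Theorem 3.2(i)]{mug1}. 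Second, I would establish the even analogue of Lemma~\ref{lemd}, a braided equivalence $\mathcal{C}(\mathfrak{so}_{2n},4)_R^0\simeq(\mathcal{C}(\mathfrak{sl}_2,2n)^\mathrm{rev}\boxtimes\mathcal{C}(\mathfrak{sp}_{4n},1))_S^0$ with $S=\mathbbm{1}\oplus(g\boxtimes g')$ the order-two diagonal algebra on the nontrivial simple currents of the two factors. As in Lemmas~\ref{lem:B} and~\ref{lemd}, the rank-level embedding $\mathfrak{so}(2n)_4\times\mathfrak{su}(2)_{2n}\subseteq\mathfrak{sp}(4n)_1$ yields the Witt relation $[\mathcal{C}(\mathfrak{so}_{2n},4)_R^0]=[\mathcal{C}(\mathfrak{sl}_2,2n)]^{-2}$, \cite[Proposition 5.15(iii)]{DMNO} supplies connected \'etale algebras $A_1,A_2$ with $(\mathcal{C}(\mathfrak{so}_{2n},4)_R^0)_{A_1}^0\simeq((\mathcal{C}(\mathfrak{sl}_2,2n)^\mathrm{rev}\boxtimes\mathcal{C}(\mathfrak{sp}_{4n},1))_S^0)_{A_2}^0$, and the dimension identity $\dim\mathcal{C}(\mathfrak{so}_{2n},4)=\dim\mathcal{C}(\mathfrak{sl}_2,2n)^2$ from Lemma~\ref{lemd2} matches global dimensions. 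Granting the model, the two required subcategories appear exactly as in the odd case, $\otimes_S$-generated by the simple free $S$-modules $\overline{(2)\boxtimes(0)}$ and $\overline{(0)\boxtimes(2)}$ and each equivalent to $\mathcal{C}(\mathfrak{sl}_2,2n)_\mathrm{ad}^\mathrm{rev}$ (using $\mathcal{C}(\mathfrak{sp}_{4n},1)\simeq\mathcal{C}(\mathfrak{sl}_2,2n)^\mathrm{rev}$ from Lemma~\ref{lemd2}); since almost every simple $S$-module is free, their fusion and mutual transparency can be read off from the fusion rules of $\mathcal{C}(\mathfrak{sl}_2,2n)$.

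The main obstacle is pinning down $A_1$ and $A_2$. In the odd case this was painless because $\mathcal{C}(\mathfrak{sl}_2,2n)^\mathrm{rev}\boxtimes\mathcal{C}(\mathfrak{sp}_{4n},1)$ had a unique nontrivial connected \'etale algebra and $\mathcal{C}(\mathfrak{sl}_2,2n)_\mathrm{ad}$ was completely anisotropic; for $n$ even both fail, since $4\mid 2n$ forces a Type D algebra in $\mathcal{C}(\mathfrak{sl}_2,2n)$ and hence a whole lattice of connected \'etale algebras in $(\mathcal{C}(\mathfrak{sl}_2,2n)^{\boxtimes2})^\mathrm{rev}$. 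The clean anisotropy-plus-dimension argument of Lemmas~\ref{lem:B} and~\ref{lemd2} therefore does not force $A_1=A_2=\mathbbm{1}$. Resolving this demands an actual classification of the connected \'etale algebras of $(\mathcal{C}(\mathfrak{sl}_2,2n)^{\boxtimes2})^\mathrm{rev}$ via \cite[Theorem 3.6]{DNO}, followed by elimination of the unwanted ones using central charge, twists, and dimensions; the genuinely exceptional values $n=8,14$ (levels $2n=16,28$, where $\mathcal{C}(\mathfrak{sl}_2,2n)$ gains extra algebras) would have to be handled individually, just as the ranks $36$ and $100$ were in Lemma~\ref{lemd2}.

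A route that may sidestep this classification is to anchor on the already-proven Lemma~\ref{lemd2}. De-equivariantizing both sides of the conjecture by the residual $\mathcal{E}=\mathrm{Rep}(\mathbb{Z}/2\mathbb{Z})$ collapses the $\boxtimes_{\mathcal{E}}$-product to a Deligne product and reproduces precisely the equivalence $(\mathcal{C}(\mathfrak{so}_{2n},4)_{R'}^0)^\mathrm{rev}\simeq(\mathcal{C}(\mathfrak{sl}_2,2n)_S^0)^{\boxtimes2}$ of Lemma~\ref{lemd2} for the maximal Type D algebra $R'$. The conjecture is thus the statement that this known modular equivalence lifts uniquely along the $\mathbb{Z}/2$-equivariantization recorded by $\mathcal{E}$, and I would attempt the lift by applying \cite[Proposition 4.3]{DNO} directly to $(\mathcal{C}(\mathfrak{so}_{2n},4)_R^0)_\mathrm{ad}$ using only the two subcategories above, thereby avoiding the global equivalence invoked above and its attendant algebra bookkeeping entirely.
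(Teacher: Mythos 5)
First, a point of comparison that matters: the paper does \emph{not} prove this statement --- it is posed as Conjecture 1, deliberately left open --- so there is no ``paper proof'' for your proposal to match. Judged on its own terms, your plan is the natural one (mirror Lemma \ref{lemd} and its corollary, with $\mathcal{E}=\mathrm{Rep}(\mathbb{Z}/2\mathbb{Z})$ replacing $\mathrm{sVec}$ because $\theta(g)=(-1)^n$ for the simple current $g\in\mathcal{C}(\mathfrak{sl}_2,2n)_{\mathrm{ad}}$), and you correctly diagnose exactly where the odd-$n$ argument breaks: for $4\mid 2n$ the ADE classification puts a Type D algebra in $\mathcal{C}(\mathfrak{sl}_2,2n)$, so $(\mathcal{C}(\mathfrak{sl}_2,2n)^{\boxtimes 2})^{\mathrm{rev}}$ is far from completely anisotropic. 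But diagnosing the obstruction is not overcoming it. In Lemmas \ref{lem:B} and \ref{lemd}, complete anisotropy forced $A_2=\mathbbm{1}$ and the global-dimension match then forced $A_1=\mathbbm{1}$; once anisotropy fails, \cite[Proposition 5.15(iii)]{DMNO} only gives $(\mathcal{C}(\mathfrak{so}_{2n},4)_R^0)_{A_1}^0\simeq(\text{model})_{A_2}^0$ with $\dim A_1=\dim A_2$, and \emph{both} algebras can be nontrivial of equal dimension, in which case the desired equivalence of the categories themselves simply does not follow. Your route 1 therefore rests on an unperformed classification of connected \'etale algebras in $(\mathcal{C}(\mathfrak{sl}_2,2n)^{\boxtimes2})^{\mathrm{rev}}$ via \cite[Theorem 3.6]{DNO} plus an unspecified elimination argument (central charge and global dimension cannot separate two nontrivial algebras of the same dimension), with $n=8,14$ still to be handled ad hoc. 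This is the actual content of the conjecture, and it is missing.

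Your route 2 has a structural gap of its own. De-equivariantizing both sides by the residual $\mathcal{E}$ and recovering Lemma \ref{lemd2} is only a consistency check: a braided equivalence of de-equivariantizations lifts to the original categories only if it intertwines the induced categorical $\mathbb{Z}/2\mathbb{Z}$-actions, and even then the lift is determined only up to torsor data; nothing in your proposal establishes this equivariance, and the phrase ``lifts uniquely'' is precisely what needs proof. Moreover, the proposed direct application of \cite[Proposition 4.3]{DNO} to $(\mathcal{C}(\mathfrak{so}_{2n},4)_R^0)_{\mathrm{ad}}$ requires exhibiting inside it a fusion subcategory braided equivalent to $\mathcal{C}(\mathfrak{sl}_2,2n)_{\mathrm{ad}}^{\mathrm{rev}}$ and nondegenerate over $\mathcal{E}$; the only construction you offer for such a subcategory ($\otimes_S$-generation by $\overline{(2)\boxtimes(0)}$ and $\overline{(0)\boxtimes(2)}$) lives in the model category whose identification with $\mathcal{C}(\mathfrak{so}_{2n},4)_R^0$ is exactly the unproven step, so the ``sidestep'' is circular at the decisive point. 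The preliminary claims (that $\bar{s}$ has trivial twist at $k=4$, lies in the adjoint, and generates its transparent center via \cite[Theorem 3.2(i)]{mug1}) are plausible and routine, but the core of the argument remains open --- consistent with the paper leaving this as a conjecture.
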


\par There are other promising $\boxtimes_\mathcal{E}$-factorizations of adjoint subcategories in Type A.

\begin{conjecture}
Let $R$ be the regular algebra of the unique nontrivial Tannakian subcategory of $\mathcal{C}(\mathfrak{sl}_{4},4)$.  There is a braided equivalence
\begin{equation}
(\mathcal{C}(\mathfrak{sl}_{4},4)_R^0)_\mathrm{ad}\simeq\mathcal{C}(\mathfrak{sl}_2,6)_\mathrm{ad}^{\boxtimes_\mathrm{sVec}2}.
\end{equation}
\end{conjecture}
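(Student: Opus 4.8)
The plan is to recognize the statement as the small missing instance of a pattern already analyzed. Under the exceptional isomorphism $\mathfrak{sl}_4\cong\mathfrak{so}_6$ the algebra $R=\mathbbm{1}\oplus4\lambda_2$ of Example \ref{ex:su42} is exactly the order-two Tannakian algebra of $\mathcal{C}(\mathfrak{so}_6,4)$, so the conjecture is the $n=3$ case of the corollary following Lemma \ref{lemd} (stated there only for odd $n\ge7$), with $\mathcal{C}(\mathfrak{sl}_2,2n)=\mathcal{C}(\mathfrak{sl}_2,6)$. I would therefore run the same two-step argument, re-checking by hand the inputs that were previously guaranteed by largeness of $n$. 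First I would establish the $n=3$ analog of Lemma \ref{lemd},
\[
\mathcal{C}(\mathfrak{sl}_4,4)_R^0\simeq(\mathcal{C}(\mathfrak{sl}_2,6)^\mathrm{rev}\boxtimes\mathcal{C}(\mathfrak{sp}_{12},1))_S^0,
\]
and then transport the relative-tensor decomposition of the adjoint across this braided equivalence.

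For the first step I would use the rank-level embedding $\mathfrak{so}(6)_4\times\mathfrak{su}(2)_6\subseteq\mathfrak{sp}(12)_1$, giving the Witt relation $[\mathcal{C}(\mathfrak{sl}_4,4)_R^0]=[\mathcal{C}(\mathfrak{sl}_2,6)^\mathrm{rev}\boxtimes\mathcal{C}(\mathfrak{sp}_{12},1)]$, and then repeat the uniqueness-of-algebra argument of Lemma \ref{lemd}. The only role of $n\ge7$ there was to guarantee complete anisotropy of $\mathcal{C}(\mathfrak{sl}_2,2n)$; for $n=3$ this persists because $4\nmid6$ places $\mathcal{C}(\mathfrak{sl}_2,6)$ outside both the Type D and Type E cases of Example \ref{ex:ade}, so its only connected \'etale algebra is trivial. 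Combining $\mathcal{C}(\mathfrak{sp}_{12},1)\simeq\mathcal{C}(\mathfrak{sl}_2,6)^\mathrm{rev}$ as fusion categories with the global-dimension comparison carried out in Lemma \ref{lemd} then forces both connecting \'etale algebras to be trivial and upgrades the Witt relation to the displayed equivalence.

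With that identification in hand the benefit recorded after Lemma \ref{lemd} applies: almost every simple $S$-module is free, so fusion in the right-hand category is controlled by the fusion rules of $\mathcal{C}(\mathfrak{sl}_2,6)$. I would then isolate the two fusion subcategories $\otimes_S$-generated by the simple free modules $\overline{(2)\boxtimes(0)}$ and $\overline{(0)\boxtimes(2)}$; by monoidality of the free-module functor each is braided equivalent to $\mathcal{C}(\mathfrak{sl}_2,6)_\mathrm{ad}^\mathrm{rev}$, each is slightly degenerate with transparent subcategory $\mathrm{sVec}$, and the two centralize one another inside $(\mathcal{C}(\mathfrak{sl}_4,4)_R^0)_\mathrm{ad}$. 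Because that adjoint is nondegenerate over $\mathrm{sVec}$, Proposition 4.3 of \cite{DNO} yields $(\mathcal{C}(\mathfrak{sl}_4,4)_R^0)_\mathrm{ad}^\mathrm{rev}\simeq\mathcal{C}(\mathfrak{sl}_2,6)_\mathrm{ad}^{\boxtimes_\mathrm{sVec}2}$. As a first consistency check, the right-hand side has exactly $8$ simple objects, being the de-equivariantization of the $16$-object product $\mathcal{C}(\mathfrak{sl}_2,6)_\mathrm{ad}\boxtimes\mathcal{C}(\mathfrak{sl}_2,6)_\mathrm{ad}$ by the free action of the boson $\mathbbm{1}\oplus(6\boxtimes6)$; this pins down the rank of the target and should be reconciled with the count recorded in Example \ref{ex:su42}.

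I expect the main obstacle to be precisely that $n=3$ falls below the range for which Lemma \ref{lemd} was established, so no asymptotic estimate may be quoted and each uniqueness claim must be re-verified at this single level; the delicate point is confirming that $\mathcal{C}(\mathfrak{sl}_2,6)^\mathrm{rev}\boxtimes\mathcal{C}(\mathfrak{sp}_{12},1)$ carries no connected \'etale algebra beyond the diagonal $S$, since the small ranks create extra opportunities for coincidental subcategory equivalences. A secondary subtlety is orientation: the argument naturally produces the statement for $(\mathcal{C}(\mathfrak{sl}_4,4)_R^0)_\mathrm{ad}^\mathrm{rev}$, so to obtain the conjecture as written one must also verify $\mathcal{C}(\mathfrak{sl}_2,6)_\mathrm{ad}^{\boxtimes_\mathrm{sVec}2}\simeq(\mathcal{C}(\mathfrak{sl}_2,6)_\mathrm{ad}^\mathrm{rev})^{\boxtimes_\mathrm{sVec}2}$, which the coincidence $\dim(2)=\dim(4)$ in $\mathcal{C}(\mathfrak{sl}_2,6)_\mathrm{ad}$ makes plausible but does not force. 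Finally, since both sides have only $8$ simple objects, I would compute the inherited $s$- and $t$-matrices on each side and check that they agree, corroborating the braided equivalence independently of the Witt-theoretic route.
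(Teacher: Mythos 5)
First, a point of order: the paper does not prove this statement --- it appears as Conjecture 2, explicitly left open --- so there is no proof of record to compare against, and your proposal must be judged on its own merits. Its frame is sensible and well informed: the reduction via $\mathfrak{sl}_4\cong\mathfrak{so}_6$ does identify the conjecture as the $n=3$ analog of Lemma \ref{lemd} and its corollary, and your consistency checks are essentially correct. Indeed both sides have rank $8$ and global dimension $48+32\sqrt{2}$; note that when you ``reconcile'' with Example \ref{ex:su42} you will find that its ``rank 7'' is a slip --- the $14$ simple objects of $\mathcal{C}(\mathfrak{sl}_4,4)_R^0$ split $8+6$ across the $\mathbb{Z}/2\mathbb{Z}$-grading, because two of the three split modules (those over the weights fixed by $4\lambda_2$ with trivial grading) lie in the adjoint component --- so the count works out in the conjecture's favor, not against it. Your orientation worry is also already settled: the paper itself cites $\mathcal{C}(\mathfrak{sl}_2,6)_\mathrm{ad}\simeq\mathcal{C}(\mathfrak{sl}_2,6)_\mathrm{ad}^\mathrm{rev}$ from \cite[Section 5.5]{DNO} immediately after the conjecture, which disposes of the $\mathrm{rev}$ discrepancy by citation rather than by the dimension coincidence you gesture at.

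The genuine gap sits in your step 1, exactly at the point you flag as delicate, and your proposed patch does not fill it. Complete anisotropy of the two factors does \emph{not} yield uniqueness of the nontrivial connected \'etale algebra in $\mathcal{C}(\mathfrak{sl}_2,6)^\mathrm{rev}\boxtimes\mathcal{C}(\mathfrak{sp}_{12},1)$: by \cite[Theorem 3.6]{DNO} every braided equivalence between a fusion subcategory of one factor and the reverse of a fusion subcategory of the other produces such an algebra, not only equivalences involving the whole factors. For odd $n\geq7$ the only available gluing is $\mathrm{sVec}\simeq\mathrm{sVec}$ between the pointed parts, which yields $S$ and makes the paper's bookkeeping in Lemma \ref{lemd} work. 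At $n=3$, however, the self-reversibility $\mathcal{C}(\mathfrak{sl}_2,6)_\mathrm{ad}\simeq\mathcal{C}(\mathfrak{sl}_2,6)_\mathrm{ad}^\mathrm{rev}$ --- the very fact needed for the orientation --- composes with the equivalence $\mathcal{C}(\mathfrak{sp}_{12},1)_\mathrm{ad}\simeq\mathcal{C}(\mathfrak{sl}_2,6)_\mathrm{ad}^\mathrm{rev}$ (twists $(1,\pm i,\mp i,-1)$ and fusion rules match under $2\leftrightarrow4$) to give a braided equivalence between the two adjoint subcategories, hence a \emph{second} nontrivial connected \'etale algebra $A_\phi$ of dimension $8+4\sqrt{2}$ supported on $\mathcal{C}(\mathfrak{sl}_2,6)^\mathrm{rev}_\mathrm{ad}\boxtimes\mathcal{C}(\mathfrak{sp}_{12},1)_\mathrm{ad}$. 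This breaks the argument you import: the Witt relation only supplies \emph{some} pair $(A_1,A_2)$ with $(\mathcal{C}(\mathfrak{sl}_4,4)_R^0)^0_{A_1}\simeq(\mathcal{C}(\mathfrak{sl}_2,6)^\mathrm{rev}\boxtimes\mathcal{C}(\mathfrak{sp}_{12},1))^0_{A_2}$, and the branch $A_2=A_\phi$ with $\dim(A_1)=4+2\sqrt{2}$ satisfies the same global-dimension equation as the desired branch $(A_1,A_2)=(\mathbbm{1},S)$ (both local-module categories then have dimension $4$), so it cannot be excluded by the dimension comparison that closes Lemma \ref{lemd}. To finish you must either rule out this branch by finer invariants, or abandon the Witt-theoretic detour and verify the rank-$14$ equivalence $\mathcal{C}(\mathfrak{sl}_4,4)_R^0\simeq(\mathcal{C}(\mathfrak{sl}_2,6)^\mathrm{rev}\boxtimes\mathcal{C}(\mathfrak{sp}_{12},1))_S^0$ directly --- e.g.\ your $s$- and $t$-matrix computation, promoted from a consistency check to the actual proof. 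As written, the proposal does not establish the conjecture.
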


\par For even $n\in\mathbb{Z}_{\geq}4$, there exists a Tannakian fusion subcategory of rank $n/2$.  If we denote the regular algebra of this fusion subcategory by $R$, then $(\mathcal{C}(\mathfrak{sl}_n,n)_R^0)_\mathrm{pt}\simeq\mathrm{sVec}$.

\begin{question}
Does $(\mathcal{C}(\mathfrak{sl}_n,n)_R^0)_\mathrm{ad}$ always have a $\boxtimes_\mathrm{sVec}$-factorization?
\end{question}

\par The reason for this line of inquiry is to study relations in the Witt group of slightly degenerate braided fusion categories $s\mathcal{W}$.  For instance we note that $\mathcal{C}(\mathfrak{sl}_2,6)_\mathrm{ad}\simeq\mathcal{C}(\mathfrak{sl}_2,6)_\mathrm{ad}^\mathrm{rev}$ \cite[Section 5.5]{DNO}, hence if true, Conjecture 2 implies
\begin{equation}
(\mathcal{C}(\mathfrak{sl}_{4},4)_R^0)_\mathrm{ad}\simeq\mathcal{C}(\mathfrak{sl}_2,6)_\mathrm{ad}\boxtimes_\mathrm{sVec}\mathcal{C}(\mathfrak{sl}_2,6)_\mathrm{ad}^\mathrm{rev}\simeq\mathcal{Z}(\mathcal{C}(\mathfrak{sl}_2,6)_\mathrm{ad},\mathrm{sVec}),
\end{equation}
by \cite[Corollary 4.4]{DNO} where $\mathcal{Z}(\mathcal{C}(\mathfrak{sl}_2,6)_\mathrm{ad},\mathrm{sVec})$ is the centralizer of $\mathrm{sVec}\hookrightarrow\mathcal{Z}(\mathcal{C}(\mathfrak{sl}_2,6)_\mathrm{ad})$.

\begin{problem}
Classify all relations in $s\mathcal{W}$ generated by supermodular subcategories of $\mathcal{C}(\mathfrak{g},k)_R^0$ when $\mathfrak{g}$ is Type A or D.
\end{problem}

\end{section}

\end{section}


{\footnotesize\bibliography{bib}}

\begin{thebibliography}{10}

\bibitem{metaplectic}
Eddy Ardonne, Meng Cheng, Eric~C. Rowell, and Zhenghan Wang.
\newblock Classification of metaplectic modular categories.
\newblock {\em Journal of Algebra}, 466:141 -- 146, 2016.

\bibitem{bb}
F.~A. Bais and Peter Bouwknegt.
\newblock A classification of subgroup truncations of the bosonic string.
\newblock {\em Nuclear Physics}, 279:561--570, 1987.

\bibitem{BaKi}
Bojko Bakalov and Alexander~Kirillov Jr.
\newblock {\em Lectures on Tensor Categories and Modular Functors}, volume~21
  of {\em University Lecture Series}.
\newblock American Mathematical Society, 2001.

\bibitem{baver1996fusion}
Ernest Baver and Doron Gepner.
\newblock Fusion rules for extended current algebras.
\newblock {\em Modern Physics Letters A}, 11(24):1929--1945, 1996.

\bibitem{2017arXiv171010284B}
P.~{Bruillard}, P.~{Gustafson}, J.~{Yael Plavnik}, and E.~{Carson Rowell}.
\newblock {Dimension as a quantum statistic and the classification of
  metaplectic categories}.
\newblock {\em ArXiv e-prints}, October 2017.

\bibitem{2016arXiv160904896B}
P.~{Bruillard}, J.~{Yael Plavnik}, and E.~C. {Rowell}.
\newblock {Modular Categories of Dimension $p^3m$ with $m$ Square-Free}.
\newblock {\em ArXiv e-prints}, September 2016.

\bibitem{bump}
Daniel Bump.
\newblock {\em Lie Groups}.
\newblock Graduate Texts in Mathematics. Springer New York, 2013.

\bibitem{Cappelli:2010}
A.~Cappelli and J.~Zuber.
\newblock {A}-{D}-{E} {C}lassification of {C}onformal {F}ield {T}heories.
\newblock {\em Scholarpedia}, 5(4):10314, 2010.
\newblock revision \#90926.

\bibitem{zuber}
Alfredo Cappelli, Claude Itzykson, and J.~B. Zuber.
\newblock The {A-D-E} classification of minimal and {$A_1^{(1)}$} conformal
  invariant theories.
\newblock {\em Communications in Mathematical Physics}, 113(1):1--26, 1987.

\bibitem{COQUEREAUX2014258}
R.~Coquereaux.
\newblock Quantum mckay correspondence and global dimensions for fusion and
  module-categories associated with lie groups.
\newblock {\em Journal of Algebra}, 398:258 -- 283, 2014.

\bibitem{coq2}
Robert Coquereaux, Rochdi Rais, and El~Hassan Tahri.
\newblock Exceptional quantum subgroups for the rank two {L}ie algebras {$B_2$}
  and {$G_2$}.
\newblock {\em Journal of Mathematical Physics}, 51(092302), 2010.

\bibitem{DMNO}
Alexei Davydov, Michael M{\"u}ger, Dmitri Nikshych, and Victor Ostrik.
\newblock The {W}itt group of nondegenerate braided fusion categories.
\newblock {\em Journal f{\"u}r die reine und angewandte Mathematik (Crelles
  Journal)}, (677):135--177, 2013.

\bibitem{DNO}
Alexei Davydov, Dmitri Nikshych, and Victor Ostrik.
\newblock On the structure of the {W}itt group of braided fusion categories.
\newblock {\em Selecta Mathematica}, (19):237--269, 2013.

\bibitem{DGNO}
Vladimir Drinfeld, Shlomo Gelaki, Dmitri Nikshych, and Victor Ostrik.
\newblock On braided fusion categories {I}.
\newblock {\em Selecta Mathematica}, (16):1--119, 2010.

\bibitem{2018arXiv180401657E}
C.~{Edie-Michell}, C.~{Jones}, and J.~{Plavnik}.
\newblock {Fusion Rules for $\mathbb{Z}/2\mathbb{Z}$ Permutation Gauging}.
\newblock {\em ArXiv e-prints}, April 2018.

\bibitem{sebas}
Sebas Eli{\"e}ns.
\newblock {Anyon condensation: Topological symmetry breaking phase transitions
  and commutative algebra objects in braided tensor categories}.
\newblock Master's thesis, Universiteit van Amsterdam, Netherlands, 2010.

\bibitem{tcat}
Pavel Etingof, Shlomo Gelaki., Dmitri Nikshych, and Victor Ostrik.
\newblock {\em Tensor Categories}.
\newblock Mathematical Surveys and Monographs. American Mathematical Society,
  2015.

\bibitem{fink}
Michael Finkelberg.
\newblock An equivalence of fusion categories.
\newblock {\em Geometric Functional Analysis}, 6:249--267, 1996.

\bibitem{conformal}
Philippe~Di Francesco, Pierre Mathieu, and David S{\'e}n{\'e}chal.
\newblock {\em Conformal Field Theory}.
\newblock Graduate Texts in Contemporary Physics. Springer New York, 2012.

\bibitem{fuchs1996matrix}
J{\"u}rgen Fuchs, Bert Schellekens, and Christoph Schweigert.
\newblock A matrix s for all simple current extensions.
\newblock {\em arXiv preprint hep-th/9601078}, 1996.

\bibitem{banffgannon}
Terry Gannon.
\newblock Banff {I}nternational {R}esearch {S}tation, {F}usion {C}ategories and
  {S}ubfactors (18w5195).
\newblock http://videos.birs.ca/2018/18w5195/201810191114-Gannon.mp4.

\bibitem{gannon2002}
Terry Gannon.
\newblock The automorphisms of affine fusion rings.
\newblock {\em Advances in Mathematics}, 165(2):165 -- 193, 2002.

\bibitem{gannon1996}
Terry Gannon, P.~Ruelle, and M.~A. Walton.
\newblock Automorphism modular invariants of current algebras.
\newblock {\em Communications in Mathematical Physics}, 179(1):121--156, Jul
  1996.

\bibitem{galaki}
Shlomo Gelaki and Dmitri Nikshych.
\newblock Nilpotent fusion categories.
\newblock {\em Advances in Mathematics}, 217(3):1053 -- 1071, 2008.

\bibitem{greenough2010monoidal}
Justin Greenough.
\newblock Monoidal 2-structure of bimodule categories.
\newblock {\em Journal of Algebra}, 324(8):1818--1859, 2010.

\bibitem{2018arXiv180800698G}
P.~{Gustafson}, E.~{Rowell}, and Y.~{Ruan}.
\newblock {Metaplectic Categories, Gauging and Property F}.
\newblock {\em ArXiv e-prints}, August 2018.

\bibitem{hump}
James~E. Humphreys.
\newblock {\em Introduction to Lie Algebras and Representation Theory}.
\newblock Graduate Texts in Mathematics. Springer, 1972.

\bibitem{kacpet}
Victor~G. Kac and Dale~H. Peterson.
\newblock Infinite-dimensional {L}ie algebras, theta functions and modular
  forms.
\newblock {\em Advances in Mathematics}, 53(2):125--264, 1984.

\bibitem{kw}
Victor~G Kac and Minoru Wakimoto.
\newblock Modular and conformal invariance constraints in representation theory
  of affine algebras.
\newblock {\em Advances in Mathematics}, 70(2):156 -- 236, 1988.

\bibitem{KiO}
Alexander Kirillov and Victor Ostrik.
\newblock On a {$q$}-analogue of the {McKay} correspondence and the {ADE}
  classification of {$\mathfrak{sl}_2$} conformal field theories.
\newblock {\em Advances in Mathematics}, 171(2):183--227, 2002.

\bibitem{kong}
Liang Kong.
\newblock Anyon condensation and tensor categories.
\newblock {\em Nuclear Physics B}, 886:436--482, 2014.

\bibitem{mug1}
Michael M{\"u}ger.
\newblock On the structure of modular categories.
\newblock {\em Proceedings of the London Mathematical Society}, (87):291--308,
  2003.

\bibitem{NACULICH1990417}
S.G. Naculich, H.A. Riggs, and H.J. Schnitzer.
\newblock Group-level duality in {WZW} models and {Chern-Simons} theory.
\newblock {\em Physics Letters B}, 246(3):417 -- 422, 1990.

\bibitem{Ostrik2003}
Victor Ostrik.
\newblock Module categories, weak {Hopf} algebras and modular invariants.
\newblock {\em Transformation Groups}, 8(2):177--206, 2003.

\bibitem{ranklevel}
Victor Ostrik and Michael Sun.
\newblock Level-rank duality via tensor categories.
\newblock {\em Communications in Mathematical Physics}, 326(1):49--61, Feb
  2014.

\bibitem{pareigis}
B.~Pareigis.
\newblock On braiding and dyslexia.
\newblock {\em Journal of Algebra}, (171):413--425, 1995.

\bibitem{ENO}
Dmitri~Nikshych Pavel~Etingof and Victor Ostrik.
\newblock On fusion categories.
\newblock {\em Annals of Mathematics}, 162(2):581--642, 2005.

\bibitem{rowell2009}
Eric Rowell, Richard Stong, and Zhenghan Wang.
\newblock On classification of modular tensor categories.
\newblock {\em Communications in Mathematical Physics}, 292(2):343--389, Dec
  2009.

\bibitem{sawin2002}
Stephen Sawin.
\newblock Jones-witten invariants for nonsimply connected {L}ie groups and the
  geometry of the {W}eyl alcove.
\newblock {\em Advances in Mathematics}, 165(1):1 -- 34, 2002.

\bibitem{Sawin06}
Stephen Sawin.
\newblock Closed subsets of the {W}eyl alcove and {TQFT}s.
\newblock {\em Pacific Journal of Mathematics}, 228(2):305--324, 2006.

\bibitem{Sawin03}
Stephen Sawin.
\newblock Quantum groups at roots of unity and modularity.
\newblock {\em Journal of Knot Theory and Its Ramifications}, (15):1245--1277,
  2006.

\bibitem{schell}
A.~N. Schellekens and S.~Yankielowicz.
\newblock {Simple Currents, Modular Invariants and Fixed Points}.
\newblock {\em Int. J. Mod. Phys.}, A5:2903--2952, 1990.

\bibitem{sw}
Andr{\'e} Schellekens and N.~Warner.
\newblock Conformal subalgebras of {K}ac-{M}oody algebras.
\newblock {\em Physical Review D}, 34:3092--3096, 1986.

\bibitem{banffschopieray}
Andrew Schopieray.
\newblock Banff {I}nternational {R}esearch {S}tation, {F}usion {C}ategories and
  {S}ubfactors (18w5195).
\newblock http://videos.birs.ca/2018/18w5195/201810170832-Schopieray.mp4.

\bibitem{schopieray2017}
Andrew Schopieray.
\newblock Classification of {$\mathfrak{sl}_3$} relations in the {W}itt group
  of nondegenerate braided fusion categories.
\newblock {\em Communications in Mathematical Physics}, 353(3):1103--1127,
  2017.

\bibitem{schopieray2}
Andrew Schopieray.
\newblock Level bounds for exceptional quantum subgroups in rank two.
\newblock {\em Internat. J. Math.}, 29(5):1850034, 33, 2018.

\end{thebibliography}
\bibliographystyle{plain}

\end{document}